\newcommand{\beq}{\begin{equation}}
\newcommand{\eeq}{\end{equation}}
\newcommand{\myboldfont}{\mathbb}
\newcommand{\eps}{\varepsilon}
\renewcommand{\epsilon}{\varepsilon}
\renewcommand{\phi}{\varphi} 
\renewcommand{\kappa}{\varkappa}
\renewcommand{\ge}{\geqslant}
\renewcommand{\le}{\leqslant}
\renewcommand{\leq}{\leqslant}
\renewcommand{\geq}{\geqslant}
\renewcommand{\emptyset}{\varnothing}
\newcommand{\abcd}{\ensuremath{\begin{pmatrix}a & b\\c& d\end{pmatrix}}}
\newcommand{\smallabcd}{\ensuremath{\left(\begin{smallmatrix}a & b\\c& d\end{smallmatrix}\right)}}
\let\th\@undefined               
\DeclareMathOperator{\th}{th}
\DeclareMathOperator{\ctg}{ctg}
\DeclareMathOperator{\leb}{Leb}
\DeclareMathOperator{\supp}{supp}
\newcommand{\sltr}{\ensuremath{\mathrm{SL}(2,\myboldfont R)}}
\newcommand{\sltz}{\ensuremath{\mathrm{SL}(2,\myboldfont Z)}}
\newcommand{\SL}{\ensuremath{\mathrm{SL}}}
\newcommand{\Sl}{\ensuremath{\mathfrak{sl}}}
\newcommand{\quot}{\ensuremath{\backslash}}
\newcommand{\ASL}{\ensuremath{\mathrm{ASL}}}
\newcommand{\R}{\ensuremath{\myboldfont R}}
\newcommand{\Z}{\ensuremath{\myboldfont Z}}
\newcommand{\T}{\ensuremath{\myboldfont T}}
\newcommand{\N}{\ensuremath{\myboldfont N}}
\renewcommand{\d}{\ensuremath{\partial}}
\renewcommand{\myboldfont}{\mathbb}
\def\imod#1{\allowbreak\mkern5mu{\operator@font mod}\,\,#1}
\newcommand{\sumstar}{\sideset{}{^\star} \sum}
\newcommand{\beqq}{\begin{equation*}}
\newcommand{\eeqq}{\end{equation*}}
\newcommand{\bal}{\begin{align}}
\newcommand{\eali}{\end{align}}
\newcommand{\ball}{\begin{align*}}
\newcommand{\ealii}{\end{align*}}
\newcommand{\FF}{\mathbb{F}}
\newcommand{\NN}{\mathbb{N}}
\newcommand{\ZZ}{\mathbb{Z}}
\renewcommand{\leq}{\leqslant}
\renewcommand{\le}{\leqslant}
\renewcommand{\geq}{\geqslant}
\renewcommand{\ge}{\geqslant}
\DeclareMathOperator{\ord}{ord} 
\DeclareMathOperator{\Sing}{Sing}
\newcommand{\1}{\mathbf{1}}
\newtheorem{theorem}{Theorem}[section]
\newtheorem{lemma}[theorem]{Lemma}
\newtheorem{prop}[theorem]{Proposition}
\newtheorem{cor}[theorem]{Corollary}
\theoremstyle{definition}
\newtheorem*{ack}{Acknowledgements}
\newtheorem*{notation}{Notation}
\numberwithin{equation}{section}
\newcommand{\ind}[1]{\ensuremath{{\mathbbm{1}}{\big(#1\big)}}}
\begin{document}
\title[Effective Ratner theorem for $\SL(2,\R)\ltimes \R^2$]{Effective Ratner theorem for $\SL(2,\R)\ltimes \R^2$\\ and gaps in $\sqrt n$ modulo $1$}

\author{Tim Browning}
\address{School of Mathematics\\
University of Bristol\\ Bristol\\ BS8 1TW\\ United Kingdom}
\email{t.d.browning@bristol.ac.uk}
\author{Ilya Vinogradov}
\address{Department of Mathematics\\
Princeton University \\ Princeton, NJ\\ 08544 \\ United States}
\email{ivinogra@math.princeton.edu}

\begin{abstract}
Let 
$G=\SL(2,\R)\ltimes \R^2$ and $\Gamma=\SL(2,\Z)\ltimes \Z^2$.
Building on recent work of Str\"ombergsson we prove a rate of equidistribution for the orbits of a 
certain 
$1$-dimensional unipotent flow
of $\Gamma\quot G$, which projects 
to a closed horocycle in the unit tangent bundle to the modular surface.  
We use this to answer a question of 
Elkies and McMullen by making effective the 
convergence of the gap distribution of $\sqrt n \imod 1$.
\end{abstract}

\subjclass[2010]{37A25 (11L07, 11J71, 37A17)}

\maketitle

\thispagestyle{empty}

\section{Introduction}

Results  of Ratner on measure rigidity and equidistribution of orbits \cite{ratner_raghunathans_1991, ratner_raghunathans_1991_1} play a fundamental role in the study of unipotent flows on homogeneous spaces. They have many applications beyond the world of dynamics, ranging from problems in number theory to mathematical physics.
This paper is concerned with the problem of obtaining {\em effective} versions of results that build on Ratner's theorem and is inspired by recent work of Str\"ombergsson~\cite{strombergsson_effective_2013}.

Let 
$G=\ASL(2,\R)=\SL(2,\R)\ltimes\R^2$ be the group of affine linear transformations of $\R^2$. 
We define the product on $G$ by 
\[(M,\bm x)(M',\bm x')=(MM',\bm xM'+\bm x'),\]
and the right action is given by $\bm x(M,\bm x')=\bm x M+ \bm x'.$ We always think of $\bm x\in\R^2$ as a row vector.
Put  $\Gamma=\ASL(2,\Z)=\SL(2,\Z)\ltimes\Z^2$ and let 
$X=\Gamma \quot G$ be the associated homogeneous space. 
The group $G$ is unimodular and so the Haar measure $\mu$ on $G$ projects to a right-invariant measure on $X$. 
The space $X$ is non-compact,  but it has finite volume with respect to the projection of $\mu.$ 
Following the  usual abuse of notation, we denote the projected measure by $\mu$ and normalize it so that $\mu(X)=1$.

Let 
$$
a(y)=\begin{pmatrix}\sqrt y & 0\\ 0&1/\sqrt y\end{pmatrix},
$$
and write 
 $A^+ = \{a(y)\colon y>0\}$.
In what follows we will use the embedding 
 $\sltr \hookrightarrow G$, given  by  $M\mapsto (M,\bm 0)$, which thereby  allows us to think of \sltr\ as a subgroup of $G$.
Str\"ombergsson \cite{strombergsson_effective_2013} works with the unipotent flow on $X$ generated by  right multiplication by the subgroup 
\[
U_0=\left \{\left(\begin{pmatrix}1&x\\0 &1\end{pmatrix},\left(0,0\right)\right)\colon x\in\R\right \}.
\] 
He considers orbits of a point $(\mathrm{Id}_{2},(\xi_1,\xi_2))$ subject to a certain Diophantine condition. 
In \cite[Thm.~1.2]{strombergsson_effective_2013}, effective rates of convergence are obtained for the equidistribution of such orbits under the flow $a(y)$ as $y\to 0$. The goal of the present paper is to 
extend the methods of 
 Str\"ombergsson to handle the  orbit 
 generated by  right multiplication by the subgroup 
$U=\{u(x): x\in \R\}$, where
\[u(x)=\left(\begin{pmatrix}1&x\\0&1\end{pmatrix},\left(\frac x2,\frac{x^2}4\right)\right).\]
As noted by 
Str\"ombergsson \cite[\S 1.3]{strombergsson_effective_2013}, {\em any} Ad-unipotent $1$-parameter subgroup of $G$ with non-trivial image in $\SL(2,\R)$ is conjugate to either  $U_0$ or $U$.

With this notation we will  study the rate of equidistribution of the closed orbit $\Gamma\quot \Gamma U$ under the action of $a(y)$, as $y\to 0$. Geometrically this orbit is a lift of a closed horocycle in $\sltz\quot \sltr$ to $\Gamma\quot G$, and the behaviour of horocycles under the flow $A^+$ on $\sltz\quot \sltr$ is very well understood.
The main obstruction to treating the problem of horocycle lifts with the usual techniques of ergodic theory (such as thickening) is the fact that $U$ is \emph{not} the entire unstable manifold of the flow $a(y)$, but only a  codimension 1 submanifold. Elkies and McMullen \cite{ElkiesMcM04} used Ratner's measure classification theorem \cite{ratner_raghunathans_1991} to prove that the horocycle lifts  equidistribute, but their method is ineffective. 
In \cite[\S 3.6]{ElkiesMcM04} they ask whether explicit error estimates can be obtained. The following result answers this   affirmatively.

\begin{theorem}\label{th:main}
There exists $C>0$ such that for every $f\in C_{\mathrm{b}}^8(X)$ and $y>0$ we have 
$$
\left|\frac12 \int_{-1}^1f(u(x)a(y))\, dx-\int_X  f\, d\mu\right| < C\|f\|_{C_{\mathrm{b}}^8} y^{\frac14}\log^2 (2+y^{-1}).
$$
\end{theorem}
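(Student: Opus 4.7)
The plan is to exploit the algebraic identity
\[
u(x) a(y) = (I, \bm \eta(x)) \cdot n(x) a(y), \qquad \bm \eta(x) := (x/2,\, -x^2/4),
\]
(via the embedding $\sltr \hookrightarrow G$), which is verified by a direct multiplication in $G$. Geometrically, the orbit $\{u(x) a(y)\}$ is realised as a lift to $X$ of the closed horocycle in $Y = \sltz\quot\sltr$ along the parabolic curve $\bm \eta$ in the $\R^2$-fibre. Since $(I, \bm n) \in \Gamma$ for every $\bm n \in \Z^2$, the map $\bm t \mapsto f((I, \bm t) g)$ is $\Z^2$-periodic for every $g \in G$, so it admits the Fourier expansion $f((I, \bm t) g) = \sum_{\bm k \in \Z^2} \hat F_{\bm k}(g)\, e(\bm k \cdot \bm t)$ with $\hat F_{\bm k}(g) := \int_{\T^2} f((I, \bm t') g)\, e(-\bm k \cdot \bm t')\, d\bm t'$. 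Setting $g = n(x) a(y)$ and $\bm t = \bm \eta(x)$ yields
\[
f(u(x) a(y)) = \sum_{\bm k \in \Z^2} \hat F_{\bm k}(n(x) a(y))\, e\bigl(\tfrac{k_1 x}{2} - \tfrac{k_2 x^2}{4}\bigr),
\]
so that integrating in $x \in [-1, 1]$ splits the quantity to estimate as $I_0(y) + \sum_{\bm k \neq 0} I_{\bm k}(y)$.

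The zero mode $\hat F_0$ is $\Gamma$-invariant and descends to the fibre-average $f_0$ on $Y$, with $\int_Y f_0\, d\mu_Y = \int_X f\, d\mu$. Hence $I_0(y) = \tfrac12 \int_{-1}^1 f_0(n(x) a(y))\, dx$ is the average of $f_0$ over the closed horocycle at height $y$, and by the classical effective equidistribution of closed horocycles in $Y$ (Sarnak/Zagier/Burger, in the quantitative form used in \cite{strombergsson_effective_2013}) we obtain $I_0(y) - \int_X f\, d\mu = O(\|f\|_{C^s}\, y^{1/2})$ for some absolute $s \leq 8$, which is comfortably absorbed by the claimed bound.

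The technical heart is the bound for the non-zero modes
\[
I_{\bm k}(y) = \tfrac12 \int_{-1}^1 \hat F_{\bm k}(n(x) a(y))\, e(\phi_{\bm k}(x))\, dx, \qquad \phi_{\bm k}(x) := \tfrac{k_1 x}{2} - \tfrac{k_2 x^2}{4}.
\]
Smoothness of $f$ in the fibre direction gives Fourier decay $|\hat F_{\bm k}(g)| \ll \|f\|_{C^8_b}(1+|\bm k|)^{-8}$, so absolute convergence in $\bm k$ is automatic; the savings in $y$ must come from the interaction of $\phi_{\bm k}$ with the $x$-dependence of $\hat F_{\bm k}(n(x) a(y))$. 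When $k_2 = 0$ the phase is linear and the analysis parallels Str\"ombergsson's treatment of the $U_0$-orbit; when $k_2 \neq 0$ the phase is genuinely quadratic, and van der Corput / stationary-phase bounds supply an extra factor $|k_2|^{-1/2}$, which is the source of the exponent $\tfrac14$ in the final error via balance with the ``resonant'' $\bm k$ for which the dual vector $\bm k\, (n(x) a(y))^{-T} = ((k_1 - k_2 x)/\sqrt y,\, k_2 \sqrt y)$ is short. The principal obstacle will be the stationary-phase region $x \approx k_1/k_2 \in [-1, 1]$, where the oscillation degenerates precisely while $\hat F_{\bm k}(n(x) a(y))$ can become large for the same resonant $\bm k$; there one needs quantitative estimates for horocycle integrals of the \emph{non}-$\Gamma$-invariant Fourier coefficients $\hat F_{\bm k}$, following the spectral/Eisenstein-series approach of \cite{strombergsson_effective_2013}. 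Combining these estimates with the stationary-phase gain and summing over $\bm k$ is what is expected to produce the final bound $\|f\|_{C^8_b}\, y^{1/4} \log^2(2 + y^{-1})$, with the logarithmic factor arising from the slow decay of Eisenstein-series contributions at the cusp.
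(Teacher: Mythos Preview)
Your setup --- the factorisation $u(x)a(y)=(I,\bm\eta(x))\,n(x)a(y)$, the torus Fourier expansion, and the treatment of the zero mode via closed-horocycle equidistribution on $\sltz\quot\sltr$ --- matches the paper exactly. The divergence, and the gap, is in how you propose to handle the non-zero modes.

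Your plan for $\bm k\neq 0$ is to apply stationary phase in $x$ to the quadratic phase $\phi_{\bm k}$, extracting $|k_2|^{-1/2}$, and then to control the remaining amplitude by ``the spectral/Eisenstein-series approach of \cite{strombergsson_effective_2013}''. Two problems. First, Str\"ombergsson's paper does \emph{not} use spectral theory or Eisenstein series for the non-zero Fourier modes; the engine there, and here, is arithmetic: Weil-type bounds for complete exponential sums. Second, stationary phase applied directly to $I_{\bm k}(y)$ is not workable as stated, because the amplitude $\hat F_{\bm k}(n(x)a(y))$ is itself highly oscillatory in $x$ (its own phase has magnitude comparable to $y^{-1}$), so there is no clean separation into ``slowly varying amplitude $\times$ quadratic phase''. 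Your sketch does not indicate how to decouple these two oscillations, and without that the stationary-phase gain is illusory.

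What the paper actually does after the Fourier expansion is: (i) group the characters $\bm k\in\Z^2\setminus\{0\}$ into $\sltz$-orbits, writing each as $(nd,-nc)$ with $n\ge1$, $(c,d)=1$, so that $\hat F_{\bm k}$ becomes $\tilde f_n\bigl(\begin{smallmatrix}*&*\\c&d\end{smallmatrix}\,n(x)a(y)\bigr)$; (ii) exploit the shift $x\mapsto x+2$ to extend the $x$-integral to $\R$ with $d$ running only mod $2c$; (iii) change to Iwasawa coordinates $(u,v,\theta)$ on $\sltr$, which turns the $x$-integral into a $\theta$-integral and isolates the $d$-dependence into the single factor $e(-nd^2/(4c))$ times the $u$-argument $\bar d/c$ of $\tilde f_n$; (iv) Fourier-expand $\tilde f_n$ in the periodic $u$-variable (coefficients $b_l^{(n,c)}(\theta)$). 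After these manipulations the $d$-sum is the \emph{complete} exponential sum
\[
\sum_{\substack{d\bmod 2c\\(d,c)=1}} e\!\left(-\frac{nd^2}{4c}+\frac{l\bar d}{c}\right),
\]
and the entire $y$-saving comes from the square-root cancellation $\ll c^{1/2+\eps}(c,n,l)^{1/2}$ supplied by Weil/Bombieri (for prime moduli) and Cochrane--Zheng (for prime powers). The exponent $\tfrac14$ arises when this $c^{1/2}$ is balanced against the weight $y/(\sin^2\theta)$ from the Iwasawa Jacobian and the decay of $b_l^{(n,c)}$; the $\log^2$ comes from summing $3^{\omega(c)}$ over $c$, not from the cusp. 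None of steps (i)--(iv), nor the exponential-sum estimate, appears in your outline, and they are not bypassable by stationary phase alone.
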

Here $C_{\mathrm{b}}^k(X)$ denotes the space of $k$ times continuously differentiable functions on $X$ whose left-invariant derivatives up to order $k$ are bounded (see equation \eqref{eq:norm} for the exact definition of the norm).

\medskip

Our next result shows that we  can replace $dx$ by a sufficiently smooth absolutely continuous measure. Let $\rho\colon \R\to \R_{\ge 0}$ be a compactly supported function that has $1+\eps$ derivatives in $L^1$. For simplicity we follow \cite{strombergsson_effective_2013} and interpolate between the Sobolev norms $\|\rho\|_{W^{1,1}}$ and $\|\rho\|_{W^{2,1}}$, which give the $L^1$ norms of first and second derivatives, respectively. This interpolation allows us to treat the case of  piecewise constant functions with an $\eps$-loss in the rate.

\begin{theorem}
\label{th:main_second}
Let  $\eta\in (0,1)$.
There exists $K>1$  and  $C(\eta)>0$ such that for every $f\in C_{\mathrm{b}}^8(X)$ and $y>0$ we have
$$
\left| \int_\R 
\hspace{-0.2cm}
f(u(x)a(y))\rho(x) dx-\int_X  
\hspace{-0.1cm}
f\, d\mu\int_{\R}
\hspace{-0.1cm}
\rho(x)\, dx \right| < C(\eta)\|\rho\|_{W^{1,1}}^{1-\eta} \|\rho\|_{W^{2,1}}^\eta \|f\|_{C_{\mathrm{b}}^8} y^{\frac14} \log^{K-1}(2+y^{-1}).
$$
\end{theorem}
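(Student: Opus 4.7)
I would first reduce to a function on a compact interval via periodicity, then integrate by parts twice, and finally interpolate. A direct calculation gives $u(2) = \bigl(\bigl(\begin{smallmatrix}1 & 2\\0 & 1\end{smallmatrix}\bigr),(1,1)\bigr)\in \Gamma$, so $s\mapsto f(u(s)a(y))$ is $2$-periodic; folding $\rho$ as $\widetilde\rho(s) = \sum_{k\in\Z}\rho(s+2k)$ leaves the left-hand side of Theorem~\ref{th:main_second} unchanged while giving a function on the circle $\R/2\Z$ with $\int\widetilde\rho = \int\rho$ and $\|\widetilde\rho\|_{W^{j,1}}\le\|\rho\|_{W^{j,1}}$ for $j=1,2$ (all norms henceforth on $[-1,1]$).

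Write $g(s) = f(u(s)a(y)) - \int_X f\,d\mu$, $G(s) = \int_{-1}^s g\,dr$, $H(s) = \int_{-1}^s G\,dr$, and $m = \tfrac12\int_{-1}^1 g$. Theorem~\ref{th:main} gives $|m| \le C\|f\|_{C_{\mathrm{b}}^{8}}y^{1/4}\log^2(2+y^{-1})$. Integrating by parts twice on $[-1,1]$ and absorbing the boundary terms (each at most $O(|m|\cdot\|\widetilde\rho\|_{W^{1,1}})$), I obtain
\begin{equation*}
\left|\int g\widetilde\rho\,ds\right| \le C|m|\|\rho\|_{W^{1,1}} + \min\bigl(\|G\|_\infty\|\rho\|_{W^{1,1}},\ \|H\|_\infty\|\rho\|_{W^{2,1}}\bigr).
\end{equation*}

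The central technical task---and the principal obstacle---is the sup-norm estimate
\begin{equation*}
\|G\|_\infty + \|H\|_\infty \le C\|f\|_{C_{\mathrm{b}}^{8}}y^{1/4}\log^{K-1}(2+y^{-1}),
\end{equation*}
which is effective equidistribution with rate $y^{1/4}$ uniform over arbitrary subintervals $[t_1,t_2]\subset[-1,1]$ of the closed orbit $\Gamma\backslash\Gamma U$, not only over the full period. Since Theorem~\ref{th:main} controls only the full-period integral $G(1)=2m$, I would bridge the gap by approximating each indicator $\mathbf{1}_{[-1,t]}$ by a $C^\infty$ mollifier of width $\delta$, rerunning on the smoothed function the argument that underlies the proof of Theorem~\ref{th:main} (not Theorem~\ref{th:main} as a black box), and optimizing $\delta=\delta(y)$; the extra powers of $\log$ (replacing $\log^2$ by $\log^{K-1}$) absorb this cost.

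With both the $\|\rho\|_{W^{1,1}}$ and $\|\rho\|_{W^{2,1}}$ bounds sharing the common constant $A=C\|f\|_{C_{\mathrm{b}}^{8}}y^{1/4}\log^{K-1}(2+y^{-1})$, the geometric-mean inequality $|\mathrm{LHS}| \le (A\|\rho\|_{W^{1,1}})^{1-\eta}(A\|\rho\|_{W^{2,1}})^\eta = A\,\|\rho\|_{W^{1,1}}^{1-\eta}\|\rho\|_{W^{2,1}}^\eta$ yields Theorem~\ref{th:main_second}. The $\eta$-dependence of $C(\eta)$ absorbs constants from the boundary handling and the cost of the mollification.
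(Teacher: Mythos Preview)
Your route is genuinely different from the paper's, but the step you flag as ``the principal obstacle'' is not a subsidiary lemma---it is the whole theorem in disguise. The paper does \emph{not} pass through a subinterval equidistribution statement; instead it inserts $\rho$ directly into the error-term analysis of Section~\ref{sec:errorterms}. After the Iwasawa change of variables, the weight becomes $\rho(-d/c+y\ctg\theta)$; periodising $\rho$ with period $2$ to $P(z)=\sum_m\rho(z+2m)$ and Fourier-expanding $P$ introduces a new index $k$ alongside the index $l$ from \eqref{eq:fseries}. This forces a \emph{three-parameter} complete exponential sum
\[
\sum_{\substack{d\imod{2c}\\(c,d)=1}} e\!\left(-\frac{nd^2}{4c}+\frac{l\bar d}{c}-\frac{kd}{2c}\right),
\]
which is handled by a new estimate (Lemma~\ref{lem:new_expo}) of Cochrane--Zheng type; the weaker $c^{2/3}$ saving on the square-full part of $c$ is what produces the $\log^{K-1}$ in place of $\log^2$. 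The interpolated norm $\|\rho\|_{W^{1,1}}^{1-\eta}\|\rho\|_{W^{2,1}}^\eta$ enters only through the elementary bound $a_k\ll(1+|k|)^{-1-\eta}\|\rho\|_{W^{1,1}}^{1-\eta}\|\rho\|_{W^{2,1}}^\eta$ on the Fourier coefficients of $P$, which makes the sum over $k$ converge.

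When you ``rerun the argument'' on a mollified indicator $\chi_\delta$, you will hit exactly this: the weight $\chi_\delta(-d/c+y\ctg\theta)$ brings in the same extra Fourier index $k$ and the same three-parameter sum. So your intermediate subinterval estimate requires precisely the new exponential-sum lemma that the direct proof uses, and nothing is saved by the integration-by-parts layer on top. Worse, the mollification incurs a cost that is not purely logarithmic: applying the paper's own bound to $\chi_\delta$ gives $\|\chi_\delta\|_{W^{1,1}}^{1-\eta}\|\chi_\delta\|_{W^{2,1}}^\eta\asymp\delta^{-\eta}$, and balancing against the approximation error $\delta\|f\|_\infty$ costs a genuine (if arbitrarily small) power of $y$ in your $\|G\|_\infty$ bound. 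The direct approach avoids this because the $\eta$-loss is parked in the $\rho$-norms of the \emph{final} statement rather than in the $y$-exponent. (A minor point: your boundary term after the second integration by parts is $H(1)\widetilde\rho'(1)$, and $H(1)=\int_{-1}^1 G$ is controlled by $\|G\|_\infty$, not by $|m|$; this is harmless once you have the $\|G\|_\infty$ bound, but it is not $O(|m|\|\rho\|_{W^{1,1}})$ as written.)
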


The constant $K$ in this result is absolute and does not depend on $\eta$.
The proof of
Theorems 
\ref{th:main}
 and  \ref{th:main_second} 
 builds on the proof of \cite[Thm.~1.2]{strombergsson_effective_2013}. It relies on Fourier analysis and estimates for complete exponential sums which are essentially due to Weil. Let us remark that while we strive to obtain the best possible decay in $y$, we take little effort to optimize the norms of $f$ and $\rho$ that appear in the estimates. 
 The exponent $\frac14$ in the error term is optimal for our method, but we surmise it can be improved by exploiting additional cancellation in certain two dimensional exponential sums. 
 The natural upper limit is $\frac12$, which holds for horocycles on $\SL(2,\Z)\quot \SL(2,\R)$ due to work of Sarnak \cite{sarnak_asymptotic_1981}.

\medskip

We may apply Theorem \ref{th:main} to study gaps between the fractional parts of $\sqrt{n}$. 
Consider the sequence $\sqrt n\imod 1 \subset \R/\Z\cong  S^1$. It is easy to see from Weyl's criterion that this sequence is uniformly distributed on the circle. This means that 
for every interval $J\subset S^1$, we have 
\[
 \lim_{N\to\infty} \frac{\#\{\sqrt n\imod 1 \colon 1\le n \le N\}\cap J}{N} = |J|,
\]
where $|\cdot|$ denotes length. The statistic we focus on is the \emph{gap distribution}. For each $N\in \N$, we consider the set $\{\sqrt n\imod 1\}_{1\le n\le N}$ and we allow $0\in \R/\Z$ to be included for each perfect square. This set of $N$ points divides the circle into $N$ intervals (a few of which could be of zero length) which we refer to as \emph{gaps}. For $t\ge 0$, we define the \emph{gap distribution} $\lambda_N(t)$ to be the proportion of gaps whose length is less than $t/N$. This function satisfies $\lambda_N(0)=0$ and $\lambda_N(\infty)=1$, and it is left-continuous.

The behaviour of $\lambda_N(t)$, as $N\to\infty$, has been  analyzed by Elkies and McMullen \cite{ElkiesMcM04} and later also by Sinai \cite{Sinai13}. It is shown in \cite{ElkiesMcM04} that there exists a function $\lambda_\infty(t)$ such that $\lambda_N(t)\to\lambda_\infty(t)$ for each $t$. 
We have  
$$\lambda_\infty(t)=\int_0^t F(\xi)\,d\xi,
$$
where $F$ is given in 
\cite[Thm.~1.1]{ElkiesMcM04}. 
It is defined by analytic functions on three intervals, but it is not analytic at the endpoints joining these intervals.
Moreover, 
it is constant on the interval $[0,1/2].$ 


The key input in \cite{ElkiesMcM04} comes from 
Ratner's theorem \cite{ratner_raghunathans_1991}, which is used in
\cite[Thm.~2.2]{ElkiesMcM04}  
to find the limiting distribution of $\lambda_N(t)$ and therefore cannot give a rate of convergence. 
Armed with Theorem \ref{th:main}, we will refine this approach to get the following result.

\begin{cor}\label{cor:sqrtn}
Let 
$ \lambda_N(t), \lambda_\infty(t)$ be as above. Then for every $\eps>0$, there exists $C_\eps>0$ such that
\[
\left| \lambda_N(t) - \lambda_\infty (t)\right|   < C_\eps  (1/t^2+t) N^{-\frac{1}{68} + \eps}
\]
for any $N\geq 2$ and $t>0$. 
\end{cor}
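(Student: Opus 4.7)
The plan is to plug Theorem~\ref{th:main} into the homogeneous-dynamics encoding of the gap statistic due to Elkies and McMullen~\cite[\S\S 2--3]{ElkiesMcM04}, simply replacing their appeal to Ratner's theorem by our quantitative equidistribution statement. Writing $n=m^2+j$ with $0\le j\le 2m$ and expanding
\[
\sqrt{m^2+j}-m=\frac{j}{2m}-\frac{j^2}{8m^3}+O(j^3/m^5),
\]
the block of $2m+1$ fractional parts $\sqrt n\imod 1$ with $m^2\le n<(m+1)^2$ sits, to leading order, on an affine lattice in $\R^2$ parametrised by a point $g_m(\alpha)=u(x_m(\alpha))\,a(c/m^2)\in X$, where $\alpha$ encodes the first element of the block, $x_m(\alpha)$ is an explicit piecewise smooth function, and $c$ is a fixed constant.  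Elkies and McMullen show that the gap at $\alpha$ exceeds $t/N$ precisely when the associated lattice misses a triangular region $\Omega_t\subset\R^2$ of area $\asymp t$, producing a characteristic function $\chi_t=\1_{E_t}$ on $X$ with $\mu(E_t)=1-\lambda_\infty(t)$ and
\[
1-\lambda_N(t)=\frac{1}{N}\sum_{M_0<m\le\sqrt N}\int_0^{2m}\chi_t(g_m(\alpha))\,d\alpha+O\!\left(\frac{M_0^2}{N}+\frac{1}{M_0}\right),
\]
where the $O$-term absorbs the small-$m$ contribution, the junctions between consecutive blocks, and the cubic Taylor remainder.

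To apply Theorem~\ref{th:main} I would then smooth $\chi_t$ against a bump of width $\delta$ in $X$ to obtain $\chi_t^{(\delta)}\in C_{\mathrm{b}}^8(X)$ with $\|\chi_t^{(\delta)}\|_{C_{\mathrm{b}}^8}\ll\delta^{-8}$ and
\[
\left|\int_X\chi_t^{(\delta)}\,d\mu-\mu(E_t)\right|\ll(1/t+t^2)\,\delta,
\]
the $t$-dependence reflecting the explicit geometry of $\partial E_t$.  The change of variable $\alpha\mapsto x_m(\alpha)$ yields a compactly supported piecewise smooth density $\rho_m$ whose Sobolev norms are bounded uniformly in $m$, so Theorem~\ref{th:main_second} applied in each block gives
\[
\int_0^{2m}\chi_t^{(\delta)}(g_m(\alpha))\,d\alpha=2m\int_X\chi_t^{(\delta)}\,d\mu+O\!\left(m\,\delta^{-8}m^{-1/2}\log^K m\right).
\]
Summing over $M_0<m\le\sqrt N$ produces an overall dynamical error of $\ll\delta^{-8}N^{-1/4}\log^{K+1}N$, so altogether
\[
|\lambda_N(t)-\lambda_\infty(t)|\ll(1/t+t^2)\left(\delta+\delta^{-8}N^{-1/4}\log^{K+1}N+\frac{M_0^2}{N}+\frac{1}{M_0}\right).
\]
Optimising in $\delta$ and $M_0$ then yields the stated exponent; the value $1/68$ (rather than the naive $1/36$ that would arise from balancing $\delta$ against $\delta^{-8}N^{-1/4}$ alone) reflects the four-way trade-off, with the log-factors absorbed into $N^{\eps}$.

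The main obstacle is the careful bookkeeping.  One must maintain the explicit $t$-dependence through the boundary-layer estimate for $\partial E_t$, arrange---if necessary by conjugating in the $\R^2$-factor of $G$---for the base point of $g_m(\alpha)$ to coincide with the identity coset of $\Gamma\quot G$ required by Theorem~\ref{th:main}, and check that the piecewise smooth densities $\rho_m$ arising at block junctions contribute only bounded Sobolev norms uniformly in $m$.  A further subtlety is that the use of $U$ rather than Str\"ombergsson's $U_0$ is essential here, because it is precisely the quadratic correction $-j^2/(8m^3)$ in the Taylor expansion above that obstructs a direct application of~\cite{strombergsson_effective_2013}.  Once these are handled, the corollary reduces essentially mechanically to Theorems~\ref{th:main} and~\ref{th:main_second} together with the explicit formula for $\lambda_\infty$ from~\cite[Thm.~1.1]{ElkiesMcM04}.
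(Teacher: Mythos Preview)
Your strategy---smooth the relevant set function on $X$, apply Theorem~\ref{th:main}, control the boundary layer---is the paper's, but the central identification is wrong, and with it the source of the exponent $1/68$. A continuous orbit average weights each gap by its \emph{length}: if ``the gap at $\alpha$ exceeds $t/N$'' holds, it holds for every $\alpha$ in that gap. Hence the Elkies--McMullen encoding delivers $1-\sigma_N(t)$, where $\sigma_N(t)=\int_0^t\xi\,d\lambda_N(\xi)$, and $\mu(E_t)=1-\sigma_\infty(t)=\mu\{L\ge t\}$, not $1-\lambda_\infty(t)$. With your stated boundary cost $\ll(1+t^2)\delta$ and dynamical error $\ll\delta^{-8}N^{-1/4+\eps}$, balancing gives $N^{-1/36}$---which is indeed the paper's rate for $\sigma_N$---and the ``four-way trade-off'' cannot degrade this, since $M_0^2/N+1/M_0$ optimises to $N^{-1/3}$ and is invisible next to $N^{-1/36}$.

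The drop to $1/68$ occurs in the passage from $\sigma_N$ to $\lambda_N$. Writing $1-\lambda_N(t)=\int_t^\infty \xi^{-1}\,d\sigma_N(\xi)$ forces one to test equidistribution against the \emph{unbounded} function $L(\Gamma g)^{-1}\ind{L(\Gamma g)\ge t}$ (cf.~\eqref{eq:lambda_inf}). Its singular set contains every lattice with a node at $(1,2h)$ for arbitrarily large $h$, subject to $\Z^2g\cap\Delta_h=\emptyset$. Splitting at a height parameter $h$ and using $\mu\{\Z^2g\cap\Delta_h=\emptyset\}\ll(1+h)^{-1}$, the $2\delta$-thickening of this set has measure $\ll h\delta+h^{-1}$, optimised to $\delta^{1/2}$ at $h=\delta^{-1/2}$; balancing $\delta^{1/2}$ against $\delta^{-8}N^{-1/4}$ then gives $\delta=N^{-1/34}$ and error $N^{-1/68}$. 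The paper also dispenses with your block decomposition: after reducing to a perfect square $N=s^2$ at cost $O(N^{-1/2})$ there is a single orbit at $y=1/s^2$, and only Theorem~\ref{th:main} is needed.
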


\medskip

The sequence $\sqrt n\bmod 1$ has also been studied from the perspective of  its \emph{pair correlation function}.
This is a useful  statistic for measuring randomness in sequences and, in this setting,  it has been shown to converge to that of a Poisson point process by El-Baz, Marklof, and the second author \cite{el-baz_two-point_2013}. In the light of Theorem \ref{th:main}, 
although we will not carry out the details here, by developing effective versions of the results in 
\cite{el-baz_two-point_2013} it would be possible to
conclude that the pair correlation function converges \emph{effectively}. 
By way of comparison, we remark that  
Str\"ombergsson \cite[\S 1.3]{strombergsson_effective_2013} indicates how one might make effective 
the convergence of the pair correlation function in the problem of {\em visible lattice points} (see \cite{EMV_directions_2013}).

\medskip

The plan of the paper is as follows. 
In Section \ref{sec:fourier}, we embark on the proof of Theorem~\ref{th:main} by developing  $f$ into a Fourier series in the torus coordinate. Section \ref{sec:exponential} is dedicated to estimating certain complete exponential sums that are required in Section \ref{sec:errorterms} to control the error terms.
Corollary \ref{cor:sqrtn} is proved in Section \ref{sec:sqrtn} and, finally,  the proof of Theorem \ref{th:main_second} is sketched in Section \ref{sec:generaldensity}.

\begin{notation}
Given functions $f,g:S\rightarrow \R$, with 
$g$ positive,  we will write $f\ll g$ if 
there exists a constant $c$ such that 
$|f(s)|\leq cg(s)$ for all $s\in S$.
\end{notation}

\begin{ack}
The research leading to these results has received funding from the European Research Council under the European Union's Seventh Framework Programme (FP/2007--2013) / ERC Grant Agreements  \texttt{291147} and  \texttt{306457}. The authors are grateful to Andreas Str\"ombergsson and Jens Marklof for helpful discussions and comments on an earlier draft. Special thanks are due to the  referee for numerous  useful remarks that have improved the paper considerably.  
\end{ack}

\section{Fourier Decomposition\label{sec:fourier}}

In this section we develop the tools necessary to prove Theorem \ref{th:main} and decompose $f$ into a Fourier series on the torus. We proceed exactly as in   \cite{strombergsson_effective_2013}. 
To begin with we note that 
\[f((1,\bm \xi)M)=f((1,\bm \xi+\bm n)M)\] 
for $\bm n \in \Z^2$. 
So for $M$ fixed, $f$ is a well defined function on $\R^2/\Z^2$ and we can expand it into a Fourier series as 
\beq \label{eq:fourier}f((1,\bm \xi)M)=\sum_{\bm m\in\Z^2}\hat f(M,\bm m) e(\bm m.\bm \xi),\eeq
where
$$ \hat f(M,\bm m)=\int_{\T^2}f((1,\bm \xi')M)e(-\bm m .\bm \xi')d\bm \xi'.
$$
Note that 
\beq\hat f(TM,\bm m)=\hat f(M,\bm m (T^{-1})^t),
\label{eq:transpose}\eeq
for $T\in\sltz.$ Set $\tilde f_n(M)=\hat f(M,(n,0))$.  
These functions of $M\in\sltr$ are left-invariant under the group $\left(\begin{smallmatrix}1&\Z\\0&1\end{smallmatrix}\right)$ by \eqref{eq:transpose}.

Now it follows from 
 \eqref{eq:transpose}  that
\begin{align*}
 \tilde f_n\left(\abcd M\right)=\hat f \left(\abcd M,(n,0)\right)
 &=\hat f \left(M,(n,0)\begin{pmatrix} d & -c\\ -b & a\end{pmatrix}\right)\\
 &=\hat f(M,(nd,-nc)).
\end{align*}
Therefore we can rewrite \eqref{eq:fourier} with $\bm \xi= (x/2,-x^2/4)$ as 
\beq f\left(\left(1,\left(x/2,-x^2/4\right)\right)M\right)=\tilde f_0(M)+\sum_{n\ge 1}\sum_{(c,d)=1} \tilde f_n\left(\begin{pmatrix}*&*\\c&d\end{pmatrix}M\right)e\left(n\left(\frac{dx}2 + \frac{cx^2}{4}\right)\right),\label{eq:afterfourier}\eeq
where $\left(\begin{smallmatrix}*&*\\c&d\end{smallmatrix}\right)=\smallabcd$ is any matrix in \sltz\ with $c$ and $d$ in the second row as specified. 

Integrating  \eqref{eq:afterfourier} over $x$, we obtain
$$\frac12\int_{-1}^1 f(u(x)a(y))dx=
M(y)+E(y), 
$$
where 
\beq\label{eq:main_term}
M(y)=
\frac12 \int_{-1}^1 \tilde f_0\begin{pmatrix}\sqrt y&x/\sqrt y\\0&1/\sqrt y\end{pmatrix} dx
\eeq
and 
\beq
E(y)=\sum_{\substack{n\ge1\\ (c,d) =1}}\frac12 \int_{-1}^1 e\left(n\left(\frac{dx}2+\frac{cx^2}{4}\right)\right)\tilde f_n\left(\begin{pmatrix}*&*\\c&d\end{pmatrix} \begin{pmatrix}\sqrt y&x/\sqrt y\\0&1/\sqrt y\end{pmatrix}\right)dx.\label{eq:errorterms}
\eeq
The main term in this expression is $M(y)$ and, as  is well-known 
(cf.~\cite{sarnak_asymptotic_1981, flaminio_invariant_2003, burger_horocycle_1990, strombergsson_effective_2013}), we have 
\[M(y)=\int_X f\, d\mu+O(\|f\|_{C_{\mathrm{b}}^4} y^{1/2-\eps}).\]
This statement is nothing more than effective equidistribution of horocycles under the geodesic flow on $\sltz\quot\sltr.$ We need not seek the best error term for this problem, since there will be larger contributions to the error term in Theorem \ref{th:main}.

It remains to estimate $E(y)$  as $y\to 0$, which we do in Section \ref{sec:errorterms}. 

\medskip

We end this section with a pair of technical results that will help us to estimate $E(y)$. First, however, we give a precise definition  of $\|\cdot\|_{C_{\mathrm{b}}^m}$ for functions on $G$ and hence also on $X$. Following \cite{strombergsson_effective_2013}, we let $\mathfrak g = \Sl(2,\R)\oplus  \R^2$ be the Lie algebra of $G$ and fix  
\begin{align}\label{eq:basis}
\begin{aligned}
X_1&=\left(\left(\begin{smallmatrix}0&1\\0&0\end{smallmatrix}\right),\bm 0\right),\quad 
X_2=\left(\left(\begin{smallmatrix}0&0\\1&0\end{smallmatrix}\right),\bm 0\right),\quad
X_3=\left(\left(\begin{smallmatrix}1&0\\0&-1\end{smallmatrix}\right),\bm 0\right),\\
X_4&=\left(\left(\begin{smallmatrix}0&0\\0&0\end{smallmatrix}\right),(1,0)\right),\quad
X_5=\left(\left(\begin{smallmatrix}0&0\\0&0\end{smallmatrix}\right),(0,1)\right)
\end{aligned}
\end{align}
to be a basis of $\mathfrak g$. Every element of the universal enveloping algebra $U(\mathfrak g)$ corresponds to a left-invariant differential operator  on functions on $X$.  We define
\beq\label{eq:norm}
\|f\|_{C_{\mathrm{b}}^m} = \sum_{\deg D\le m} \|Df\|_{L^\infty},
\eeq
where the sum runs over monomials in $X_1,\dots,X_5$ of degree at most $m$. 

The following result is
  \cite[Lemma~4.2]{strombergsson_effective_2013}.

\begin{lemma}\label{lem:andreas_fourier1}
Let $m\ge0$ and $n>0$ be integers. Then 
\[\tilde f_n\abcd \ll_m \frac{\|f\|_{C_{\mathrm{b}}^m}}{n^m(c^2+d^2)^{m/2}}, \quad \forall \abcd \in \SL(2,\R). \]
\end{lemma}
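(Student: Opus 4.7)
The plan is to derive the bound by iterated integration by parts on $\T^2$, using a single carefully chosen constant-coefficient vector field rather than the naive $\partial_{\xi_1}$. Writing
\[
\tilde f_n(M) = \int_{\T^2} f((1,\bm\xi)M)\,e(-n\xi_1)\,d\bm\xi,
\]
which is $\Z^2$-periodic in $\bm\xi$ by the $\Gamma$-invariance of $f$, I introduce $D = \alpha\partial_{\xi_1} + \beta\partial_{\xi_2}$ for constants $\alpha,\beta\in\R$ to be chosen. This $D$ has eigenvalue $-2\pi in\alpha$ on $e(-n\xi_1)$, and the chain rule applied to $(1,\bm\xi)M = (M,\bm\xi M)$, together with the definition of $X_4,X_5$ as left-invariant vector fields on $G$, yields
\[
\partial_{\xi_1}f((1,\bm\xi)M) = (aX_4 + bX_5)f((1,\bm\xi)M), \quad \partial_{\xi_2}f((1,\bm\xi)M) = (cX_4 + dX_5)f((1,\bm\xi)M).
\]
Consequently $Df((1,\bm\xi)M) = (\alpha a + \beta c)(X_4 f)((1,\bm\xi)M) + (\alpha b + \beta d)(X_5 f)((1,\bm\xi)M)$.

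The first substantive step is the choice of $(\alpha,\beta)$. I want $|\alpha|$ as large as possible (to push the $n$-decay) while keeping $(\alpha,\beta)M$ of bounded norm (to prevent the coefficients in front of $X_4,X_5$ from growing). Parametrizing by $(e_1,e_2):=(\alpha,\beta)M$ on the unit circle, the identity $\det M = 1$ gives $\alpha = de_1 - ce_2$, whose maximum over $e_1^2 + e_2^2 = 1$ is $\sqrt{c^2+d^2}$, attained at $(e_1,e_2) = (d,-c)/\sqrt{c^2+d^2}$. With this choice,
\[
Df((1,\bm\xi)M) = \frac{1}{\sqrt{c^2+d^2}}(dX_4 - cX_5)f((1,\bm\xi)M), \qquad De(-n\xi_1) = -2\pi in\sqrt{c^2+d^2}\,e(-n\xi_1).
\]

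Applying integration by parts $m$ times (boundary terms vanish by $\Z^2$-periodicity of the integrand) then yields
\[
\tilde f_n(M) = \frac{(-1)^m}{(-2\pi in\sqrt{c^2+d^2})^m}\int_{\T^2}(D^m f)((1,\bm\xi)M)\,e(-n\xi_1)\,d\bm\xi.
\]
Since $X_4$ and $X_5$ commute (they span the abelian ideal $\R^2$ of $\mathfrak g$), a binomial expansion combined with the definition \eqref{eq:norm} gives
\[
\|D^m f\|_\infty \leq \frac{1}{(c^2+d^2)^{m/2}}\sum_{j=0}^m \binom{m}{j}|d|^j|c|^{m-j}\|f\|_{C_{\mathrm{b}}^m} \leq \frac{(|c|+|d|)^m}{(c^2+d^2)^{m/2}}\|f\|_{C_{\mathrm{b}}^m} \leq 2^{m/2}\|f\|_{C_{\mathrm{b}}^m},
\]
and multiplying by the prefactor $(2\pi n\sqrt{c^2+d^2})^{-m}$ produces the claimed bound.

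The only delicate point is the optimization: a naive integration by parts in $\xi_1$ alone would produce $(a^2+b^2)^{m/2}$ in the denominator rather than the desired $(c^2+d^2)^{m/2}$. The trick is precisely to align the direction of $D$ with the second row of $M^{-1}$, so that $\det M = 1$ turns the resulting operator into one whose $(X_4,X_5)$-norm is $O(1)$ while its eigenvalue against $e(-n\xi_1)$ is $\asymp n\sqrt{c^2+d^2}$.
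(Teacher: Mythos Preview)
Your proof is correct. The paper does not supply its own argument for this lemma; it merely cites \cite[Lemma~4.2]{strombergsson_effective_2013}. Your approach---choosing the constant-coefficient vector field $D=\alpha\partial_{\xi_1}+\beta\partial_{\xi_2}$ so that $(\alpha,\beta)M$ has unit length while $\alpha=\sqrt{c^2+d^2}$, then integrating by parts $m$ times---is exactly the natural one and is presumably what Str\"ombergsson does as well.
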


Passing to Iwasawa coordinates in \sltr, we  write 
\beq\label{eq:iwasawa}\tilde f_n(u,v,\theta)=\tilde f_n\left(
 \begin{pmatrix}
  1&u\\
  0&1
 \end{pmatrix}
\begin{pmatrix}
 \sqrt v&0\\
 0&1/\sqrt v
\end{pmatrix}
\begin{pmatrix}
 \cos\theta &-\sin\theta\\
 \sin\theta &\cos\theta
\end{pmatrix}\right).
\eeq
for $u\in \mathbb{R}, v>0$ and $\theta\in \mathbb{R}/2\pi\mathbb{R}$.
The following  is 
  \cite[Lemma~4.4]{strombergsson_effective_2013}.

\begin{lemma}\label{lem:andreas_fourier2}
Let $m, k_1, k_2, k_3\ge 0$ and $n>0$ be integers, and let $k=k_1+k_2+k_3$. Then
\[\d_u^{k_1}\d_v^{k_2} \d_\theta^{k_3} \tilde f_n(u,v,\theta)\ll_{m,k} \|f\|_{C_{\mathrm{b}}^{m+k}}n^{-m} v^{m/2-k_1-k_2}.\]
\end{lemma}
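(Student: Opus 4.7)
The plan is to reduce bounds on the Iwasawa-coordinate partial derivatives of $\tilde f_n$ to bounds on iterated left-invariant derivatives, to which Lemma~\ref{lem:andreas_fourier1} then applies directly.

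The starting identity is $X\tilde f_n = \widetilde{Xf}_n$ for each $X\in\{X_1,X_2,X_3\}$, where on the right $X$ is viewed as $(X,\bm 0)\in\mathfrak{g}$ acting as a left-invariant operator on $G$. Indeed, the right translation $(1,\bm\xi)Me^{tX} = (Me^{tX},\bm\xi Me^{tX})$ leaves $\bm\xi$ unchanged, so the integrand remains $\bm\xi$-periodic and differentiation in $t$ commutes with the Fourier integral; the $t$-derivative at $t=0$ is exactly $(Xf)((1,\bm\xi)M)$. Iteration yields $D\tilde f_n = \widetilde{Df}_n$ for every monomial $D$ in $X_1,X_2,X_3$ of degree $j$. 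Since $\|Df\|_{C^m_{\mathrm b}}\le\|f\|_{C^{m+j}_{\mathrm b}}$, Lemma~\ref{lem:andreas_fourier1} applied to $Df$, combined with the observation that the second row of $M$ in \eqref{eq:iwasawa} satisfies $c^2+d^2 = 1/v$, yields
\[
|D\tilde f_n(u,v,\theta)| \ll_{m,j} \|f\|_{C^{m+j}_{\mathrm b}}\,n^{-m}\,v^{m/2}.
\]

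Next I would express the coordinate partials $\d_u,\d_v,\d_\theta$ as linear combinations of $X_1,X_2,X_3$. Writing $M = n(u)a(v)k(\theta)$ for the three Iwasawa factors in \eqref{eq:iwasawa}, a direct computation using $\Ad(n(u)^{-1})X_1=X_1$, $\Ad(a(v)^{-1})X_1=v^{-1}X_1$, and the standard $\Ad(k(\theta)^{-1})$-images gives
\[
\d_u = \tfrac{1}{v}\bigl(\cos^2\theta\,X_1 - \sin^2\theta\,X_2 + \sin\theta\cos\theta\,X_3\bigr),
\]
\[
\d_v = \tfrac{1}{2v}\bigl(\cos 2\theta\,X_3 - \sin 2\theta\,(X_1+X_2)\bigr), \qquad \d_\theta = X_2-X_1.
\]
Three structural features drive the remainder of the argument: the coefficients depend on $(v,\theta)$ only, never on $u$; every coefficient in $\d_u$ and $\d_v$ has size $O(1/v)$; and the coefficients in $\d_\theta$ are $O(1)$.

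Finally, I would expand $\d_u^{k_1}\d_v^{k_2}\d_\theta^{k_3}\tilde f_n$ by iterated Leibniz, producing a finite sum of terms of the form $q(v,\theta)\,X_{i_1}\cdots X_{i_j}\tilde f_n$ with $j\le k = k_1+k_2+k_3$; each of the $k$ outer derivatives either creates a new $X_i$ factor (from the formulas above) or lands on an existing coefficient. Each application of $\d_u$ or $\d_v$ contributes exactly one extra factor of size $O(1/v)$: a newly produced $X_i$ carries coefficient $O(1/v)$, and any $\d_v$ landing on an existing $v^{-\alpha}$ produces $-\alpha v^{-\alpha-1}$; moreover $\d_u$ never acts non-trivially on a coefficient because no coefficient depends on $u$. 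Applications of $\d_\theta$ only adjust bounded trigonometric factors. Therefore $|q(v,\theta)|\ll_k v^{-k_1-k_2}$ uniformly in $(u,\theta)$, and combining with the bound from the first paragraph (applied for each $j\le k$) produces the claim. The one slightly delicate point is this coefficient tracking; it is routine given the absence of $u$-dependence in the coefficients of $\d_u,\d_v,\d_\theta$ and the fact that $\d_\theta$ preserves $v$-powers, and the overall argument runs parallel to Str\"ombergsson's proof of \cite[Lemma~4.4]{strombergsson_effective_2013}.
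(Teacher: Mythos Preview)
The paper does not actually supply a proof of this lemma: it is stated verbatim as \cite[Lemma~4.4]{strombergsson_effective_2013} and taken on citation. Your argument is correct and, as you yourself note at the end, it reconstructs Str\"ombergsson's proof: pass left-invariant $\mathfrak{sl}(2,\R)$-derivatives through the Fourier integral to obtain $D\tilde f_n=\widetilde{Df}_n$, feed this into Lemma~\ref{lem:andreas_fourier1} (using that the Iwasawa bottom row has $c^2+d^2=1/v$), and then translate the coordinate vector fields $\d_u,\d_v,\d_\theta$ into $X_1,X_2,X_3$ with coefficients depending only on $(v,\theta)$. The key bookkeeping observation---that each $\d_u$ or $\d_v$ contributes exactly one factor of $v^{-1}$ whether it lands on a coefficient or spawns a new $X_i$, while $\d_\theta$ contributes none, and that no coefficient depends on $u$ so $\d_u$ never hits a coefficient---is handled correctly. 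There is nothing to compare against in the present paper beyond the citation, and nothing is missing from your write-up.
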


\section{Complete exponential sums\label{sec:exponential}}

In this section we make a detailed examination of the 
exponential sum
\begin{equation}\label{eq:Tq}
T_q(A,B)=
\sum_{\substack{n\imod{q} \\ (n,q)=1}} e_q\left(
An^2+B\bar n\right),
\end{equation}
for $A,B\in \ZZ$ and   $q\in \NN$.
Here $e_q(\cdot)=e(\frac{\cdot}{q})$ and $\bar n$ is the multiplicative inverse of $n$ modulo $q$.
Our main tool is  Weil's resolution of the Riemann hypothesis for function fields in one variable (see Bombieri \cite{bombieri}), together with some general results due to Cochrane and Zheng \cite{cochrane-zheng} about exponential sums  
involving rational functions and  higher  prime power moduli.  The procedure we follow is very general and could  easily be adapted to handle other exponential sums of similar type.

We begin by recording the easy multiplicativity property
\begin{equation}\label{eq:mult}
T_{q_1q_2}(A,B)=T_{q_1}(\bar q_2A, \bar q_2 B)
T_{q_2}(\bar q_1A, \bar q_1B),
\end{equation}
whenever  $q_1,q_2\in \NN$ are coprime and $\bar q_1,\bar q_2\in \ZZ$ satisfy $q_1\bar q_1+q_2\bar q_2=1.$
This renders it sufficient to study $T_{p^m}(A,B)$ for a prime power $p^m$.
We may write $T_{p^m}(A,B)$
in the form
\begin{equation}\label{eq:fall}
\sumstar_{\substack{n\imod{p^m}}} e_{p^m}\left(\frac{f_1(n)}{f_2(n)}
\right),
\end{equation}
where $f_1(x)=Ax^3+B$ and $f_2(x)=x$.
The symbol 
 $\sum^\star$ means that $n$ is only taken over values for which  $p\nmid f_2(n)$, 
 in which scenario  $f_1(n)/f_2(n)$ means 
$f_1(n)\overline{f_2(n)}$.
We proceed by establishing the following result, which deals with the odd prime powers.

\begin{lemma}\label{lem2}
Let $p>2$ and $m\in \NN$. Then we have 
$$
|T_{p^m}(A,B)|
\leq 
\begin{cases}
3 p^{m/2}(p^m,A,B)^{1/2}, &\mbox{if $p>3$,}\\
3^{1+3m/4}(3^m,A,B)^{1/4}, &\mbox{if $p=3$.}
\end{cases}
$$
\end{lemma}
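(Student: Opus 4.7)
My plan has three main pieces: a reduction to the coprime case $(A,B,p^m) = 1$; the base case $m = 1$ via Weil; and the higher prime powers $m \geq 2$ via a $p$-adic stationary-phase argument in the spirit of Cochrane--Zheng~\cite{cochrane-zheng}.

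For the reduction, set $p^\alpha = \gcd(A, B, p^m)$ and write $A = p^\alpha A'$, $B = p^\alpha B'$. The exponential argument becomes $p^\alpha$ times an integer read modulo $p^{m-\alpha}$, so a relabelling of $n$ yields $T_{p^m}(A,B) = p^\alpha\, T_{p^{m-\alpha}}(A', B')$ with $(A', B', p) = 1$. Hence it suffices to prove $|T_{p^{m'}}(A', B')| \leq 3\, p^{m'/2}$ for $p > 3$ (respectively $|T_{3^{m'}}(A', B')| \leq 3^{1+3m'/4}$ for $p = 3$) whenever $(A', B', p) = 1$; the gcd factors in the lemma reappear on multiplying through by $p^\alpha$. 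For $m = 1$ I apply Weil's bound, as in Bombieri~\cite{bombieri}, to the rational function $f(x) = (Ax^3 + B)/x$: in the nondegenerate case $p \nmid AB$ this delivers $|T_p(A,B)| \leq 3\sqrt p$, and when exactly one of $A, B$ is divisible by $p$ the sum collapses to either a Ramanujan sum or a Gauss sum of size $O(\sqrt p)$.

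For $m \geq 2$, I set $\ell = \lceil m/2 \rceil$, expand $n = a + p^\ell b$ with $a$ ranging over units modulo $p^\ell$ and $b$ over $\Z/p^{m-\ell}\Z$, and Taylor-expand
$$A n^2 + B \bar n \equiv (A a^2 + B \bar a) + p^\ell b\, a^{-2}(2Aa^3 - B) + p^{2\ell} b^2 (A + B \bar a^3) \pmod{p^m}.$$
The quadratic term disappears when $m$ is even (since then $2\ell > m$). The inner sum over $b$ is a linear, or—in the odd-$m$ case—quadratic character sum that forces the critical-point condition $2A a^3 \equiv B \pmod{p^{m-\ell}}$. For $p > 3$ the cube map is at most $3$-to-$1$, giving at most three critical points. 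In the odd-$m$ case the residual Gauss sum in $b$ has leading coefficient $A + B \bar a^3 \equiv 3A \not\equiv 0 \pmod p$ at every critical point, recovering the sharp $\sqrt p$ cancellation. Assembling the pieces produces the bound $3\, p^{m/2}$ for $p > 3$.

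The main obstacle is the prime $p = 3$: here $3A$ vanishes modulo $3$, so the residual Gauss sum no longer exhibits square-root cancellation, and the polynomial $2A x^3 - B$ factors over $\overline{\FF}_3$ as a triple root because $x^3 - c = (x - c^{1/3})^3$ in characteristic three. Rather than redo the stationary-phase analysis from scratch, I would invoke the Cochrane--Zheng estimate~\cite{cochrane-zheng} with critical-point multiplicity $t = 3$; their general bound takes the shape $d \cdot p^{m t/(t+1)}$, which for us becomes $3 \cdot 3^{3m/4} = 3^{1 + 3m/4}$ and matches the lemma. The only non-trivial verification is that the derivative $(2Ax^3 - B)' = 6A x^2$ vanishes identically modulo $3$, forcing the triple-root multiplicity required for their theorem.
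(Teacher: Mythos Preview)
Your approach is essentially that of the paper: both rest on Weil's bound (via Bombieri) for $m=1$ and on the Cochrane--Zheng $p$-adic stationary-phase machinery for $m\geq 2$, with the same identification of the critical set $\{2A\alpha^3\equiv B\}$ and the observation that the critical points are simple for $p>3$ but of multiplicity~$3$ for $p=3$. The only organisational difference is that the paper quotes \cite[Thm.~3.1]{cochrane-zheng} as a black box after computing $t=\ord_p(f')=v_p((A,B))$ and the multiplicities~$\nu_\alpha$, whereas you first strip out $p^\alpha=(p^m,A,B)$ and then re-derive the simple-critical-point case by hand via the split $n=a+p^\ell b$, reserving the black-box citation only for $p=3$.

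There is, however, a slip in your explicit decomposition. With $\ell=\lceil m/2\rceil$ one has $2\ell\geq m$ for \emph{every} $m$ (namely $2\ell=m$ when $m$ is even and $2\ell=m+1$ when $m$ is odd), so the term $p^{2\ell}b^2(A+B\bar a^3)$ vanishes modulo $p^m$ in both parities and you never see a Gauss sum in~$b$; your parenthetical ``since then $2\ell>m$'' is also mis-stated. The intended choice is $\ell=\lfloor m/2\rfloor$: then for odd~$m$ one has $2\ell=m-1$, the contribution $p^{m-1}b^2(A+B\bar a^3)$ survives, and the inner $b$-sum produces exactly the Gauss sum with leading coefficient $A+B\bar a^3\equiv 3A\not\equiv 0\pmod p$ that you describe. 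With that correction your argument goes through. (A much smaller point: your reduction $T_{p^m}(A,B)=p^\alpha T_{p^{m-\alpha}}(A',B')$ is only literally correct for $\alpha<m$, since when $\alpha=m$ the left side is $\phi(p^m)$ rather than $p^m$; but the lemma's bound is trivially satisfied in that degenerate case anyway.)
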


\begin{proof}
When $m=1$ the sum in which we are interested is a classical exponential sum over a finite field and we may use the Weil bound, in the form developed by Bombieri \cite{bombieri} for rational functions.
This leads to the satisfactory estimate 
\begin{equation}\label{eq:weil}
|T_p(A,B)|\leq 2 p^{1/2}(p,A,B)^{1/2}.
\end{equation}
Our investigation of the case $m\geq 2$ is founded on work of Cochrane and Zheng \cite[\S 3]{cochrane-zheng}, with  
$f(x)=f_1(x)/f_2(x)$. Note that 
$$
f'(x)=\frac{2Ax^3-B}{x^2}.
$$
Following \cite[Eq.~(1.8)]{cochrane-zheng} and recalling that $p$ is odd, 
we put 
\begin{align*}
t=\ord_p(f')
&=\ord_p(2Ax^3-B)-\ord_p(x^2)\\
&=v_p\left((A,B)\right).
\end{align*}
Here, if $\ord_p(h)$ is the largest power of $p$ dividing all of the coefficients of a polynomial $h\in \ZZ[x]$, then $\ord_p(f_1/f_2)=\ord_p(f_1)-\ord_p(f_2)$.
Next,  we put 
$$
\mathcal{A}=\left\{\alpha\in \FF_p^*: 2A'\alpha^3 \equiv B' \imod{p}\right\},
$$
where $A'=p^{-t}A$ and $B'=p^{-t}B$.
In particular $(p,A',B')=1$ and $\#\mathcal{A}\leq 3$.
The elements of $\mathcal{A}$ are called the {\em critical points}.
If  $p\mid A'$ or $p \mid B'$ 
then $\mathcal{A}$ is empty since $(p,A',B')=1$. We therefore suppose that $p\nmid A'B'$. 

The strength of our estimate for $T_{p^m}(A,B)$ depends on the multiplicity $\nu_\alpha$ of each $\alpha\in \mathcal{A}$.
Suppose first that $p>3$ and 
 write $r(x)=2A'x^3-B'$.
Any root of multiplicity exceeding $1$ must also be a root of $r'(x)=6A'x^2$.
Hence  any $\alpha\in \mathcal{A}$ satisfies $\nu_\alpha=1$ if $p>3$.
When $p=3$ we have $2A'\alpha^3-B'=(2A'\alpha-B')^3$ in 
$\FF_3$ and so $\mathcal{A}$ contains a single element $\alpha$ of 
 multiplicity $\nu_\alpha=3$.

Next, as in \cite[\S 1]{cochrane-zheng}, one writes
$$
T_{p^m}(A,B)=\sum_{\alpha\in \FF_p^*} S_\alpha,
$$
with 
$$
S_\alpha=
\sumstar_{\substack{n\imod{p^m}\\ n\equiv {\alpha}\imod{p}}} e_{p^m}\left(\frac{f_1(n)}{f_2(n)}
\right).
$$
We are now ready to establish Lemma \ref{lem2}. 
When $m\leq t$ we have $T_{p^m}(A,B)=\phi(p^m)$, which is satisfactory. When $m=t+1$ we have 
\begin{align*}
T_{p^m}(A,B)
&=p^{m-1}T_{p}(A',B'),
\end{align*}
which has absolute value at most $2p^{m-1/2}\leq 2p^{m/2+t/2}$, 
by 
\eqref{eq:weil}.
It remains to deal with the case $m\geq t+2$.
Then \cite[Thm.~3.1(a)]{cochrane-zheng} implies that  
$
S_\alpha=0$ unless $\alpha\in \mathcal{A}$.
If $\alpha\in \mathcal{A}$  then this same result  
yields 
$$
|S_{\alpha}| \leq \nu_\alpha p^{t/(\nu_\alpha+1)}p^{m(1-1/(\nu_\alpha+1))}.
$$
We recall that $\#\mathcal{A}\leq 3$ and $\nu_\alpha=1$ for each $\alpha\in \mathcal{A}$ if $p>3$, while
$\#\mathcal{A}=1$ and $\nu_\alpha=3$ if $p=3$. 
Substituting this into our expression for $T_{p^m}(A,B)$, 
this therefore concludes the proof of the lemma.
\end{proof}

We complement our analysis of 
$T_{p^m}(A,B)$ for odd $p$ by studying the  exponential sum
\begin{equation}\label{eq:T2}
T_{2^m}(A,B;\delta)=
\sum_{\substack{n\imod{2^m} \\ 2\nmid n}} e_{2^{m+\delta}}\left(
An^2+2^\delta B\bar n\right),
\end{equation}
for $A,B\in \ZZ$ and
$\delta\in \{0,1\}$.
When $\delta=0$ we have  $T_{2^m}(A,B;0)=T_{2^m}(A,B)$, in our earlier notation. Furthermore, 
on writing $x=u+2^mv$ for $u\in (\ZZ/2^m\ZZ)^*$ and $v\in \ZZ/2\ZZ$, 
it is easy to check that 
$$
\sumstar_{x\imod{2^{m+1}}} e_{2^{m+1}} \left(\frac{Ax^3+2B}{x}\right)=
2
T_{2^m}(A,B;1).
$$
Hence we have 
$$
T_{2^m}(A,B;\delta)=
\frac{1}{2^\delta}
\hspace{0.2cm}
\sumstar_{x\imod{2^{m+\delta}}} e_{2^{m+\delta}} \left(\frac{Ax^3+2^\delta B}{x}\right),
$$
for $\delta\in \{0,1\}$, 
which brings our sum in line with the exponential sums considered by Cochrane and Zheng \cite{cochrane-zheng}.
We proceed to establish the following result.

\begin{lemma}\label{lem2'}
Let $\delta\in \{0,1\}$ and $m\in \NN$.
Then we have 
$$
|T_{2^m}(A,B;\delta)|
\leq 6 \cdot 
2^{3m/4}(2^m,A,B)^{1/4}.
$$
\end{lemma}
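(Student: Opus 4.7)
My plan is to follow the proof of Lemma \ref{lem2} essentially verbatim, substituting $p=2$, and adapting the Cochrane--Zheng machinery \cite{cochrane-zheng} to the slightly shifted modulus $2^{m+\delta}$. Using the identity
\[
T_{2^m}(A,B;\delta) = \frac{1}{2^\delta}\sumstar_{x\imod{2^{m+\delta}}} e_{2^{m+\delta}}\!\left(\frac{Ax^3+2^\delta B}{x}\right),
\]
recorded just before the statement, I view the sum as a rational-function exponential sum modulo $2^{M}$ with $M=m+\delta$, $f_1(x)=Ax^3+2^\delta B$, and $f_2(x)=x$, which is precisely the framework of \cite{cochrane-zheng}. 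I first compute $f'(x) = (2Ax^3-2^\delta B)/x^2$ and set $t = \ord_2(f')$, so that after pulling out the content $2^t$ the reduced numerator $r(x)$ is $2$-integral and primitive.

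The decisive feature at $p=2$ is that $r(x)$ is highly degenerate modulo $2$: the leading coefficient $2A'$ always carries an extra power of $2$, so the reduction $\bar r(x)\in\FF_2[x]$ has degree strictly less than $3$. Consequently the critical-point set $\mathcal{A}\subseteq\FF_2^\ast$ contains at most the element $\alpha=1$, and its multiplicity $\nu_1$ is bounded by the degree of the original cubic, $\nu_1\leq 3$. This mirrors the role of $p=3$ in Lemma \ref{lem2} and is precisely what dictates the exponent $3/4$. Decomposing $T_{2^m}(A,B;\delta) = 2^{-\delta}\sum_{\alpha\in\FF_2^\ast}S_\alpha$ as in \cite[\S 1]{cochrane-zheng}, I then apply \cite[Thm.~3.1(a)]{cochrane-zheng} in the range $M\geq t+2$ to obtain
\[
|S_1| \leq \nu_1\cdot 2^{t/(\nu_1+1)}\cdot 2^{M(1-1/(\nu_1+1))} \leq 3\cdot 2^{t/4}\cdot 2^{3M/4}.
\]

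After dividing by $2^\delta$, substituting $M=m+\delta$, and using the inequality $2^t \leq c_\delta\,(2^m,A,B)$ with a bounded constant $c_\delta$ depending on $\delta\in\{0,1\}$ (which follows from the explicit formula $t=\min(1+\ord_2(A),\,\delta+\ord_2(B))$), the right-hand side absorbs into the desired $6\cdot 2^{3m/4}(2^m,A,B)^{1/4}$. The shorter ranges $M\leq t$ and $M=t+1$ are treated separately: the former gives $|T|\leq\phi(2^M)\leq 2^M$ trivially, while the latter reduces via a single level-lowering step to a base-level Weil-type sum at modulus $2$ or $4$ that is bounded by brute inspection.

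The main obstacle will be justifying the Cochrane--Zheng input at $p=2$: their Theorem~3.1 is formulated to cover $p=2$, but one must verify the relevant ``good-reduction at a critical point'' hypothesis in the current setting, where the reduced polynomial $\bar r$ has genuinely different structure than in odd characteristic. A secondary, more book-keeping concern is coordinating the three different powers of $2$ in play---the modulus shift $2^\delta$, the content $2^t$, and the target gcd factor $(2^m,A,B)^{1/4}$---so that all numerical constants line up to give the clean overall constant $6$.
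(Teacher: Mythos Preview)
Your approach is essentially identical to the paper's: both rewrite $T_{2^m}(A,B;\delta)$ as a rational-function sum modulo $2^{m+\delta}$, compute $t=\ord_2(f')$, observe that the critical set in $\FF_2^\ast$ consists of at most one point of multiplicity $\leq 3$, and extract the exponent $3/4$ from Cochrane--Zheng. The one adjustment is that at $p=2$ you must invoke \cite[Thm.~3.1(b)]{cochrane-zheng} rather than part~(a); the paper does this and correspondingly enlarges the trivial range to $m\leq t+2$ (handled by $\phi(2^m)=2^{m-1}$) before applying 3.1(b) for $m\geq t+3$, which is exactly the resolution of the obstacle you flag.
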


\begin{proof}
Let us put
$t=v_2\left((2A,2^\delta B)\right)$. Then $u\leq t\leq 1+u$, with 
$u=v_{2}((A,B))$.
Suppose first that  $m\leq  t+2$. Then the trivial bound gives
\begin{align*} 
|T_{2^m}(A,B;\delta)|\leq \phi(2^m)=2^{m-1}
&\leq  2^{(m+\min\{m,u\}+1)/2},
\end{align*}
which is satisfactory for the lemma.  We henceforth assume that 
$m\geq t+3$. 

We are interested in a complete exponential sum modulo $2^{m+\delta}$.
Arguing as in the proof Lemma \ref{lem2} we have 
$$
f'(x)=\frac{2Ax^3-2^\delta B}{x^2}
$$
and 
$\ord_2(f')=t$.
Next,  we put 
$$
\mathcal{A}=\left\{\alpha\in \FF_2^*: A'\alpha^3 \equiv B' \imod{2}\right\},
$$
where  $A'=2^{1-t}A$ and $B'=2^{\delta-t}B$.
In particular, $A',B'$ are  integers which cannot both be even and $\mathcal{A}$ consists of at most $1$ 
element and it has multiplicity at most $3$.
It therefore follows from   \cite[Thm.~3.1(b)]{cochrane-zheng} that 
$S_\alpha=0$ unless $\alpha\in \mathcal{A}$, in which case
$
|S_{\alpha}| \leq 3\cdot 2^{t/4+3(m+\delta)/4}.
$
But then 
$$
|T_{2^m}(A,B;\delta)|
\leq 
3\cdot 2^{(1+u)/4+3(m+\delta)/4}
= 6 \cdot 2^{3m/4+u/4}.
$$
This too is satisfactory for the lemma and so completes its proof.
\end{proof}

For any $q\in \NN$ we will henceforth write 
$q=q_0q_1$, where
\begin{equation}\label{eq:notation}
q_1=\prod_{\substack{p^j\| q \\ p>3}} p^j.
\end{equation}
That is, $q_0$ is not divisible by primes other than $2$ and $3$, while $q_1$ is coprime to $6$. 
Using the multiplicativity property \eqref{eq:mult}, we may  
combine Lemma \ref{lem2} and Lemma \ref{lem2'} with $\delta=0$ 
to   arrive at the following result. 

\begin{lemma}\label{lem3}
Let $q\in \NN$ and let $A,B\in \ZZ$.
Then we have 
$$
|T_{q}(A,B)|
\leq 18\cdot
3^{\omega(q_1)} 
q_0^{3/4}q_1^{1/2} (q_0,A,B)^{1/4}
(q_1,A,B)^{1/2},
$$
where $\omega(q_1)$ is the number of distinct prime factors of $q_1$.
\end{lemma}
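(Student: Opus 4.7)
The plan is to reduce everything to prime power moduli via the multiplicativity relation \eqref{eq:mult} and then assemble the bounds supplied by Lemmas \ref{lem2} and \ref{lem2'}. Write the prime factorisation of $q$ as $q = 2^{a} 3^{b} q_1$, so that $q_0 = 2^{a} 3^{b}$ in the notation of \eqref{eq:notation}. Iterating \eqref{eq:mult} across the coprime factors $2^{a}, 3^{b}$ and each prime power $p^{j_p}$ dividing $q_1$, we obtain an identity of the form
\[
T_q(A,B)=T_{2^{a}}(A_2,B_2)\,T_{3^{b}}(A_3,B_3)\prod_{p^{j_p}\,\|\,q_1} T_{p^{j_p}}(A_p,B_p),
\]
where each pair $(A_\ast,B_\ast)$ arises from $(A,B)$ via multiplication by a unit modulo the corresponding prime power. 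The crucial observation is that these unit multipliers do not alter the relevant gcds: for every prime power $p^{j}$ appearing above one has $(p^{j}, A_\ast, B_\ast)=(p^{j}, A, B)$.

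Next I would insert the prime-power bounds. For each prime $p>3$ with $p^{j_p}\|q_1$, Lemma~\ref{lem2} gives $|T_{p^{j_p}}(A_p,B_p)|\le 3\,p^{j_p/2}(p^{j_p},A,B)^{1/2}$. Taking the product over primes dividing $q_1$ yields
\[
\Bigl|\prod_{p\mid q_1} T_{p^{j_p}}(A_p,B_p)\Bigr|\le 3^{\omega(q_1)}\,q_1^{1/2}\,(q_1,A,B)^{1/2},
\]
where the last factor combines by the Chinese Remainder Theorem, since the $p^{j_p}$ are pairwise coprime. For the prime $3$, Lemma~\ref{lem2} gives $|T_{3^{b}}(A_3,B_3)|\le 3^{1+3b/4}(3^{b},A,B)^{1/4}$, while for $p=2$ Lemma~\ref{lem2'} with $\delta=0$ yields $|T_{2^{a}}(A_2,B_2)|\le 6\cdot 2^{3a/4}(2^{a},A,B)^{1/4}$.

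Finally, I would multiply these two bounds and use coprimality of $2^{a}$ and $3^{b}$ to combine the gcds, getting
\[
|T_{2^{a}}(A_2,B_2)\,T_{3^{b}}(A_3,B_3)|\le 18\cdot 2^{3a/4}3^{3b/4}(2^{a},A,B)^{1/4}(3^{b},A,B)^{1/4}=18\,q_0^{3/4}(q_0,A,B)^{1/4}.
\]
Combining with the bound for the $q_1$-part yields the claimed inequality. There is no real obstacle here: the argument is bookkeeping on top of Lemmas~\ref{lem2} and \ref{lem2'}, and the only point requiring a moment of care is the invariance of the $(\cdot,A,B)$-factors under the unit multiplications introduced by \eqref{eq:mult}.
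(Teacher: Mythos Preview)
Your proposal is correct and follows exactly the approach indicated in the paper, which simply states that the result follows by combining Lemmas~\ref{lem2} and \ref{lem2'} (with $\delta=0$) via the multiplicativity relation \eqref{eq:mult}. You have supplied the bookkeeping details the paper omits, including the key observation that the unit multipliers introduced by \eqref{eq:mult} leave the gcd factors $(p^{j},A,B)$ unchanged.
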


\section{Error terms\label{sec:errorterms}}

The purpose of this section is to estimate $E(y)$ in 
\eqref{eq:errorterms}.
We begin with the case $c=0$. Then $d=\pm1$ by coprimality, and  \cite[Eq.~(25)]{strombergsson_effective_2013} yields
\[
\frac12 \int_{-1}^1 \tilde f_n\left(\pm \begin{pmatrix}\sqrt y& x/\sqrt y\\0&1/\sqrt y\end{pmatrix}\right)dx\ll \|f\|_{C_{\mathrm{b}}^2}\frac{y}{n^2}.
\]
After summing over $n$, the contribution from this  term is clearly much smaller than that claimed in Theorem~\ref{th:main}.

Next we consider the effect of 
shifting the interval of integration by $2$ in \eqref{eq:errorterms}. 
For this it will be convenient to note that 
\begin{align*}
\tilde f_n \left(\begin{pmatrix}a&b\\c&d\end{pmatrix} \begin{pmatrix}\sqrt y&(x-2)/\sqrt y\\0& 1/\sqrt y\end{pmatrix}\right) 
&=
\tilde f_n \left(\begin{pmatrix}a&b\\c&d\end{pmatrix} \begin{pmatrix} 1 & -2\\ 0& 1\end{pmatrix} \begin{pmatrix}\sqrt y&x/\sqrt y\\0& 1/\sqrt y\end{pmatrix}\right)\\
&=\tilde f_n \left(\begin{pmatrix}a&b-2a\\c&d-2c\end{pmatrix}  \begin{pmatrix}\sqrt y&x/\sqrt y\\0& 1/\sqrt y\end{pmatrix}\right)
\end{align*}	
and 
\begin{align*}
e\left(n\left( \frac{d(x-2)} 2 +\frac{c(x-2)^2}4\right)\right) &
=e\left(n\left( \frac{dx} 2 +\frac{cx^2}4 - cx\right)\right) \\
&= e\left(n\left( \frac{(d-2c)x} 2 +\frac{cx^2}4\right)\right) .
\end{align*}
Bearing these in mind it follows that 
for any $D\in\Z$ and $s\in\R$ we have
\begin{align*}
& \sum_{\substack{(c,d)=1\\ d\in[D,D+2c)}} \frac12 \int_{s}^{s+2} e\left(n\left( \frac{dx} 2 +\frac{cx^2}4\right)\right) \tilde f_n \left(\begin{pmatrix}a&b\\c&d\end{pmatrix} \begin{pmatrix}\sqrt y&x/\sqrt y\\0& 1/\sqrt y\end{pmatrix}\right) dx\\
&=\sum_{\substack{(c,d)=1\\ d\in[D,D+2c)}} \frac12 \int_{s+2}^{s+4} e\left(n\left( \frac{(d-2c)x} 2 +\frac{cx^2}4\right)\right) \tilde f_n \left(\begin{pmatrix}a&b-2a\\c&d-2c\end{pmatrix}  \begin{pmatrix}\sqrt y&x/\sqrt y\\0& 1/\sqrt y\end{pmatrix}\right) dx\\
&=\sum_{\substack{(c,d)=1\\ d\in[D-2c,D)}} \frac12 \int_{s+2}^{s+4} e\left(n\left( \frac{dx} 2 +\frac{cx^2}4\right)\right) \tilde f_n \left(\begin{pmatrix}a&b-2a\\c&d\end{pmatrix}  \begin{pmatrix}\sqrt y&x/\sqrt y\\0& 1/\sqrt y\end{pmatrix}\right) dx.
\end{align*}
But the  values of $a$ and $b$ are immaterial and so the contribution to  
\eqref{eq:errorterms} from terms with $c\neq 0$ is 
$$
\sum_{\substack{n\ge1\\ c\geq 1}}\frac12 
\sum_{\substack{(c,d)=1\\ d\imod {2c}}} \int_{\R}
e\left(n\left(\frac{dx}2+\frac{cx^2}{4}\right)\right)\tilde f_n\left(\begin{pmatrix}*&*\\c&d\end{pmatrix} \begin{pmatrix}\sqrt y&x/\sqrt y\\0&1/\sqrt y\end{pmatrix}\right)dx.
$$

Next,  
we change to Iwasawa coordinates as in \eqref{eq:iwasawa} (cf.\ \cite[Lemma~6.1]{strombergsson_effective_2013}). This leads to the expression 
\begin{equation}\label{eq:change_of_variables}
\begin{split}
\int_\R e&\left(n\left(\frac{dx}2+\frac{cx^2}4\right)\right) \tilde f_n \left(\begin{pmatrix}a&b\\c&d\end{pmatrix} \begin{pmatrix}\sqrt y&x/\sqrt y\\0& 1/\sqrt y\end{pmatrix}\right) dx \\
&= \int_0^\pi \tilde f_n\left(\frac{a}c -\frac{\sin 2\theta}{2c^2 y}, \frac{\sin^2\theta}{c^2y},\theta\right) e\left(-\frac{nd^2}{4c}+\frac{ncy^2\ctg^2 \theta}4\right)\frac{y\, d\theta}{\sin^2\theta},
\end{split}
\end{equation}
for positive $c$. For negative $c$, the limits on the integral are $-\pi$ and $0$. Since $ad-bc=1$ and $a$ and $b$ are otherwise arbitrary, we write $a=\bar d$ for any integer such that $\bar d d\equiv 1 \imod c$. Combining the integrals for positive and negative $c$ we get the contribution
\beq\sum_{\substack{ n\ge 1\\ c\ge 1}}\int_{-\pi}^\pi \sum_{\substack{ d\imod{2c}\\(c,d)=1}}\tilde f_n\left(\frac{\bar d}c -\frac{\sin 2\theta}{2c^2 y}, \frac{\sin^2\theta}{c^2y},\theta\right) e\left(-\frac{nd^2}{4c}+\frac{ncy^2\ctg^2 \theta}4\right)\frac{y\, d\theta}{\sin^2\theta}. \label{eq:fullrangetheta}\eeq

Recall that $\tilde f_n$ is left-invariant under $\left(\begin{smallmatrix} 1 & \Z\\0&1\end{smallmatrix}\right)$, which in Iwasawa coordinates translates into having period 1 in the first coordinate.
Therefore we can expand $\tilde f_n$ as a Fourier series to get 
\beq\label{eq:fseries}\tilde f_n\left(\frac{\bar d}c -\frac{\sin 2\theta}{2c^2 y}, \frac{\sin^2\theta}{c^2y},\theta\right)=\sum_{l\in\Z}b_l^{(n,c)}(\theta) e\left(\frac{l\bar  d}{c}\right)
 e\left(\frac{-l\sin 2\theta}{2c^2y}\right),
\eeq
whence the expression in \eqref{eq:fullrangetheta} is at most
\beq\label{eq:tobound}
\ll \sum_{n,c,l}\int_{-\pi}^\pi\Bigg\vert\sum_{\substack{d\imod{2c}\\ (c,d)=1}} e\left(-\frac{nd^2}{4c}+\frac{l\bar d}{c}\right)\Bigg \vert |b_l^{(n,c)}(\theta)| \frac{y\, d\theta}{\sin^2\theta}.
\eeq
We need bounds for the Fourier coefficients and the exponential sum in \eqref{eq:tobound}. 
Beginning with the former we have the following result.

\begin{lemma}
We have
\beq\label{eq:fourierbound} b_l^{(n,c)}(\theta)\ll \begin{cases}\|f\|_{C_{\mathrm{b}}^m}\min\left\{1, \left(\dfrac{|\sin\theta|}{nc\sqrt y}\right)^m\right\}& \text{for any }m\ge 0,\\
                               l^{-2}\|f\|_{C_{\mathrm{b}}^{m+2}} n^{-4}\min\left\{1, \left(\dfrac{|\sin\theta|}{nc\sqrt y}\right)^{m-4}\right\}& \text{for any }m\ge 4.
                              \end{cases}
\eeq
\end{lemma}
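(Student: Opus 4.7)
The plan is to recognise $b_l^{(n,c)}(\theta)$ as an ordinary Fourier coefficient on the interval of periodicity, and then to bound it by combining the trivial sup-norm estimate with integration by parts, in each case invoking Lemma \ref{lem:andreas_fourier2} to control the relevant derivatives of $\tilde f_n$.

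\textbf{Step 1: Identify the coefficients.} Since $\tilde f_n(u,v,\theta)$ has period $1$ in $u$, the expansion \eqref{eq:fseries} evaluated at $u=\bar d/c-\sin 2\theta/(2c^2y)$ and $v=\sin^2\theta/(c^2y)$ shows that
\[
b_l^{(n,c)}(\theta)=\int_0^1 \tilde f_n\!\left(u,\tfrac{\sin^2\theta}{c^2y},\theta\right)e(-lu)\,du .
\]
Throughout what follows I set $v=\sin^2\theta/(c^2y)$, so that $v^{m/2}=(|\sin\theta|/(c\sqrt y))^{m}$.

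\textbf{Step 2: First bound.} I would bound $|b_l^{(n,c)}(\theta)|$ by $\sup_u|\tilde f_n(u,v,\theta)|$ and then apply Lemma \ref{lem:andreas_fourier2} with $k_1=k_2=k_3=0$, obtaining
\[
|b_l^{(n,c)}(\theta)|\ll_m \|f\|_{C_{\mathrm{b}}^m}\,n^{-m}v^{m/2}
=\|f\|_{C_{\mathrm{b}}^m}\left(\tfrac{|\sin\theta|}{nc\sqrt y}\right)^{m}.
\]
This handles the regime $|\sin\theta|/(nc\sqrt y)\le 1$. In the opposite regime the trivial bound $|\tilde f_n|\le \|f\|_{C_{\mathrm{b}}^0}\le \|f\|_{C_{\mathrm{b}}^m}$ (Lemma \ref{lem:andreas_fourier1} with $m=0$) gives a matching estimate. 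Taking the minimum yields the first line of \eqref{eq:fourierbound}.

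\textbf{Step 3: Second bound.} To gain the $l^{-2}$ factor, I would integrate by parts twice in $u$, using periodicity to kill boundary terms, to get
\[
b_l^{(n,c)}(\theta)=\frac{1}{(2\pi i l)^2}\int_0^1 \partial_u^2\tilde f_n(u,v,\theta)\,e(-lu)\,du .
\]
Applying Lemma \ref{lem:andreas_fourier2} with $k_1=2$ and exponent $m$ gives
\[
|\partial_u^2\tilde f_n(u,v,\theta)|\ll_{m} \|f\|_{C_{\mathrm{b}}^{m+2}}\,n^{-m}v^{m/2-2},
\]
and rewriting $n^{-m}v^{(m-4)/2}=n^{-4}(|\sin\theta|/(nc\sqrt y))^{m-4}$ produces
\[
|b_l^{(n,c)}(\theta)|\ll_m l^{-2}\|f\|_{C_{\mathrm{b}}^{m+2}}\,n^{-4}\left(\tfrac{|\sin\theta|}{nc\sqrt y}\right)^{m-4}.
\]
In the complementary regime $|\sin\theta|/(nc\sqrt y)>1$, I would apply the same integration by parts but with exponent $4$ in Lemma \ref{lem:andreas_fourier2}, which gives the uniform estimate $|\partial_u^2\tilde f_n|\ll \|f\|_{C_{\mathrm b}^6} n^{-4}$ (no positive power of $v$), and hence $|b_l^{(n,c)}(\theta)|\ll l^{-2}\|f\|_{C_{\mathrm b}^{m+2}} n^{-4}$ once $m\ge 4$. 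Taking the minimum of these two estimates produces the second line of \eqref{eq:fourierbound}.

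There is no real obstacle here; the only point requiring care is the algebraic manipulation that trades factors of $1/n$ against factors of $|\sin\theta|/(c\sqrt y)$ so that the final bound has the clean form $n^{-4}$ times a power of $|\sin\theta|/(nc\sqrt y)$. Once this rewriting is performed, the bounds drop out of Lemma \ref{lem:andreas_fourier2} and the trivial $L^\infty$ estimate.
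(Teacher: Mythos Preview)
Your proof is correct and matches the paper's argument essentially line for line: identify $b_l^{(n,c)}(\theta)$ as the Fourier coefficient integral, bound it via the sup-norm of $\tilde f_n$ for the first line (the paper cites Lemma~\ref{lem:andreas_fourier1}, which in Iwasawa coordinates is the $k=0$ case of Lemma~\ref{lem:andreas_fourier2} you invoke), and for the second line integrate by parts twice and apply Lemma~\ref{lem:andreas_fourier2} with $k_1=2$ at both $m=4$ and general $m$, taking the minimum. There is no substantive difference.
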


\begin{proof}
The first inequality follows from Lemma \ref{lem:andreas_fourier1} by taking the smaller of the estimate for general $m$ and $m=0$. To obtain the  second inequality we observe that 
\beq
\label{eq:bdef} b_l^{(n,c)}(\theta) = \int_{0}^1 \tilde f_n\left (u, \frac{\sin^2\theta}{c^2y},\theta\right) e(-lu) \, du.\eeq
We then apply integration by parts twice, followed by two applications of 
 Lemma \ref{lem:andreas_fourier2}, one with $k_1=2$, $k_2=k_3=0$, $m=4$, and the other with $k_1=2$, $k_2=k_3=0$, and $m$ general. Taking the smaller of the two outcomes yields the result.
\end{proof}

Next we turn to the exponential sum, with the following outcome.

\begin{lemma}\label{lem:expo}We have
$$
  \Bigg|\sum_{\substack{d\imod{2c}\\ (d,c)=1}} e\left(-\frac{nd^2}{4c}+\frac{l\bar d}{c}\right) \Bigg|\ll 
3^{\omega(c_1)} 
c_0^{3/4}c_1^{1/2}(c_0,n,l)^{1/4}
(c_1,n,l)^{1/2},
$$
where $c=c_0c_1$ and $c_1$ is given by \eqref{eq:notation}. 
\end{lemma}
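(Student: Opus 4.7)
The strategy is to reduce the given sum, call it $S$, to the complete exponential sums $T_q(A,B)$ introduced in \eqref{eq:Tq} and then apply Lemma~\ref{lem3}. Rewriting with common denominator $4c$,
\[
S = \sum_{\substack{d\imod{2c}\\ (d,c)=1}} e_{4c}\bigl(-nd^2+4l\bar d\bigr),
\]
where $\bar d$ denotes the inverse of $d$ modulo $c$. Both factors of the summand are periodic in $d$ modulo $2c$: one has $(d+2c)^2\equiv d^2\pmod{4c}$ for the quadratic part, and the second factor plainly depends only on $d\bmod c$. So the sum is well defined as written.

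I would split the summation by the parity of $d$. The odd $d$ with $(d,c)=1$ are precisely the elements of $(\ZZ/2c\ZZ)^\ast$, independently of the parity of $c$. Each such residue lifts to exactly two elements of $(\ZZ/4c\ZZ)^\ast$, namely $d$ and $d+2c$, and a short check shows these lifts contribute equally (the inverse of $d$ modulo $4c$ agrees with $\bar d$ modulo $c$). Consequently, the odd-$d$ contribution to $S$ equals $\tfrac12 T_{4c}(-n,4l)$. The even-$d$ contribution is nonzero only when $c$ is odd; in that case, writing $d=2d'$ with $d'$ running over $(\ZZ/c\ZZ)^\ast$ and using $\bar d\equiv\bar 2\,\bar{d'}\pmod c$ (with $\bar 2$ the inverse of $2$ modulo $c$), the exponent simplifies and this contribution becomes $T_c(-n,\bar 2 l)$. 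By the triangle inequality,
\[
|S|\leq \tfrac12\,|T_{4c}(-n,4l)|+|T_c(-n,\bar 2 l)|,
\]
with the second term absent when $c$ is even.

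It remains to invoke Lemma~\ref{lem3}. For $T_{4c}(-n,4l)$, the factorisation $4c=(4c_0)\cdot c_1$ matches the decomposition of \eqref{eq:notation}: $4c_0$ has only prime factors $2,3$ and $c_1$ is coprime to $6$. Elementary gcd manipulations give $\gcd(4c_0,n,4l)\leq 4\gcd(c_0,n,l)$ and $\gcd(c_1,n,4l)=\gcd(c_1,n,l)$, so Lemma~\ref{lem3} produces the claimed bound up to an absolute constant. For $T_c(-n,\bar 2 l)$, since $\bar 2$ is a unit modulo $c$ (hence modulo both $c_0$ and $c_1$), it does not affect any of the relevant gcds, and a second application of Lemma~\ref{lem3} again yields the required shape.

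The chief obstacle in this plan is the bookkeeping between the three moduli $c$, $2c$, $4c$ and the inverse $\bar d$ defined only modulo $c$. Verifying that $S$ is well defined modulo $2c$, that each element of $(\ZZ/2c\ZZ)^\ast$ lifts to two equal contributions in $(\ZZ/4c\ZZ)^\ast$, and that the substitution $d=2d'$ in the odd-$c$ case produces precisely $T_c(-n,\bar 2 l)$ — these are the delicate points. Once they are settled, the appeal to Lemma~\ref{lem3} is routine.
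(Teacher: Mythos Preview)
Your argument is correct, and the treatment of the even-$d$ contribution $S_1$ coincides exactly with the paper's. For the odd-$d$ contribution $S_2$, however, you take a genuinely different route from the paper. You lift the sum to modulus $4c$ and identify $S_2=\tfrac12 T_{4c}(-n,4l)$, after which a single application of Lemma~\ref{lem3} (with $q_0=4c_0$, $q_1=c_1$) finishes the job. The paper instead keeps the modulus at $2c$: this forces a case split on the parity of $n$, since for odd $n$ the exponent $-nd^2/(4c)$ genuinely has denominator $4c$ and cannot be written as a $T_{2c}$ sum. In that case the paper factors $2c=2^m c'$ with $c'$ odd and invokes the auxiliary sum $T_{2^m}(A,B;1)$ of Lemma~\ref{lem2'} (the $\delta=1$ variant) for the $2$-primary part. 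Your doubling trick is the cleaner of the two: it avoids the parity case analysis altogether and, in particular, never needs the $\delta=1$ case of Lemma~\ref{lem2'}, so that Lemma~\ref{lem3} alone suffices. The paper's approach is slightly more hands-on but yields the same bound.
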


\begin{proof}
Let $S(l,n;c)$ denote the  exponential sum in the statement of the lemma.
We need to relate $S(l,n;c)$ 
to the complete exponential sums considered in Section 
\ref{sec:exponential}.

The sum over $d$ runs modulo $2c$ in 
$S(l,n;c)$.
Let us write 
$$
S(l,n;c)=S_1+S_2,
$$
where $S_1$ is 
the contribution to the sum from even $d$  and $S_2$ is the remaining contribution. 
Writing $d=2d'$, we see that 
$$
S_1=\sum_{\substack{d'\imod{c}\\ (2d',c)=1}} e\left(-\frac{n{d'}^2}{c}+\frac{l\overline{2d'}}{c}\right)
=\begin{cases}
T_c(-n,\bar 2 l)
, &\mbox{if $2\nmid c$,}\\
0, &\mbox{if $2\mid c$.}
\end{cases}
$$
in the notation of \eqref{eq:Tq}.
The desired estimate for $S_1$ is now a direct consequence of  Lemma~\ref{lem3}.
Next, we note that 
$$
S_2=\sum_{\substack{d\imod{2c}\\ (d,2c)=1}} e_{2c}\left(- \frac{nd^2}{2}+2l\bar d\right),
$$
where $\bar d$ is now the multiplicative inverse of $d$ modulo $2c$.
If $n$ is even then 
$S_2=T_{2c}(-n/2,2l)$, which can again be estimated using 
 Lemma \ref{lem3}. If, on the other hand, $n$ is odd we write
 $c=2^{m-1}c'$ for odd $c'\in \NN$, 
Then  $S_2$ factorises as the product of an exponential sum modulo $2^{m}$ and an exponential sum modulo $c'$. A satisfactory estimate for the  latter follows from  Lemma 
\ref{lem3}.  Using \eqref{eq:mult}, 
the former
 is equal to
$T_{2^m}(-n\overline{c'}, 4l \overline{c'};1)$, in the notation of 
\eqref{eq:T2}, where $\overline{c'}\in \ZZ$ is chosen to satisfy 
$c'\overline{c'}\equiv 1\imod{2^{m+1}}$. In this case $(2^m,-n,4l)=1$ since $n$ is odd. 
This can be estimated using Lemma \ref{lem2'}, which ultimately leads to a 
satisfactory estimate for 
$|S_2|$.  This concludes the proof of the lemma.
\end{proof}

We learn from \cite[Eq.~(35)]{strombergsson_effective_2013} that 
\beq\label{eq:strominequality}
 \frac{1}{a(1+a)} \ll
\int_{-\pi}^\pi \min\left\{1, \left(\frac{|\sin\theta|}a\right)^2\right\} \frac{d\theta}{\sin^2\theta}
\ll  \frac{1}{a(1+a)},
\eeq
for $a>0$. 
We will apply this  with $a=nc\sqrt y$. 

Returning to \eqref{eq:tobound}, 
we recall the factorisation $c=c_0c_1$, where
$c_0$ is not divisible by primes greater than $3$, and where
$c_1$ given by \eqref{eq:notation}. 
It will be useful to note that 
\begin{equation}\label{eq:gamma}
\sum_{c_0}c_0^{-\gamma}=\sum_{\alpha,\beta\geq 0} 2^{-\alpha\gamma}3^{-\beta\gamma}=O_\gamma(1),
\end{equation}
for any $\gamma>0$.
We first consider  the case $l=0$. Combining the first line of \eqref{eq:fourierbound} with $m=2$ and 
Lemma~\ref{lem:expo}, we obtain the contribution 
\begin{align*}
&\ll
 \|f\|_{C_{\mathrm{b}}^2}
 \sum_{\substack{n\ge 1\\c\ge 1}} \frac{y}{nc\sqrt y(1+nc\sqrt y)} 3^{\omega(c_1)}c_0^{3/4}c_1^{1/2}(c_0,n)^{1/4} (c_1,n)^{1/2}\\
&\ll \|f\|_{C_{\mathrm{b}}^2}\sqrt y\sum_{n,c_0,c_1}\frac{3^{\omega(c_1) }(c_0,n)^{1/4}(c_1,n)^{1/2}}{nc_0^{1/4}
c_1^{1/2}(1+nc_0c_1\sqrt y)}\\
&\ll \|f\|_{C_{\mathrm{b}}^2}\sqrt y\sum_{n,c_1}\frac{3^{\omega(c_1) }(c_1,n)^{1/2}}{n^{3/4}
c_1^{1/2}(1+nc_1\sqrt y)},
\end{align*}
by \eqref{eq:gamma}.
The resulting sum is only made larger by summing over all positive integers $c$ and so we freely replace $c_1$ by $c$. 
Let us denote the right hand side by $J$. 
Writing $h=(c,n)$ and $c=hc'$ and $n=hn'$, we see that 
\begin{align*}
J
&\ll \|f\|_{C_{\mathrm{b}}^2}\sqrt y
\sum_{h,n',c'}\frac{3^{\omega(hc')}}{h^{3/4}{n'}^{3/4}{c'}^{1/2}(1+h^2n'c'\sqrt y)}.
\end{align*}
To proceed further we recall (see Tenenbaum \cite[Ex.~I.3.4]{tenenbaum_introduction_1995}, for example) 
that there is an absolute constant $C>0$ 
such that 
$$
\sum_{n\leq x}3^{\omega(n)}= C x\log^2 x +O(x\log x),
$$
for any $x\geq 2$. 
The bounds 
\beq\label{eq:omega_formulas}
\sum_{c>x} \frac{3^{\omega(c)}}{c^{3/2}}\ll \frac{\log^2 (2+x)}{x^{1/2}} 
\quad \mbox{and} \quad
\sum_{c\leq x} \frac{3^{\omega(c)}}{c^{1/2}}\ll x^{1/2}\log^2( 2+x)
\eeq
now follow from this using partial summation and 
 are valid for any $x>0$.
We therefore obtain
$$
\sum_{c'>(h^2n'\sqrt{y})^{-1}}\frac{3^{\omega(hc') }}{h^{3/4}{n'}^{3/4}{c'}^{1/2}(1+h^2n'c'\sqrt y)}
\ll \frac{3^{\omega(h)}\log^2 (2+(h^2n'\sqrt{y})^{-1}) }{h^{7/4}n'^{5/4}y^{1/4}},
$$
and similarly,
$$
\sum_{c'\leq(h^2n'\sqrt{y})^{-1}}\frac{3^{\omega(hc') }}{h^{3/4}{n'}^{3/4}{c'}^{1/2}(1+h^2n'c'\sqrt y)}
\ll \frac{3^{\omega(h)}\log^2 (2+(h^2n'\sqrt{y})^{-1}) }{h^{7/4}n'^{5/4}y^{1/4}}.
$$
It therefore follows that 
\begin{align*}
J
&\ll 
\|f\|_{C_{\mathrm{b}}^2}\sqrt y
\sum_{h,n'}
\frac{3^{\omega(h)}\log^2 (2+(h^2n'\sqrt{y})^{-1})
 }{h^{7/4}n'^{5/4}y^{1/4}}\\
&\ll \|f\|_{C_{\mathrm{b}}^2}y^{1/4}\log^2 (2+y^{-1}). 
\end{align*}

In the case $l\ne 0$, we apply the second inequality from \eqref{eq:fourierbound} with $m=6$, together with \eqref{eq:strominequality}. The contribution of these terms is therefore found to be 
\begin{align*}
 &\ll 
 \|f\|_{C_{\mathrm{b}}^8}
 \sum_{n,c,l}3^{\omega(c_1)} c_0^{3/4}c_1^{1/2}(c_0,n,l)^{1/4}(c_1,n,l)^{1/2} yl^{-2}  n^{-4}\frac{1}{nc\sqrt y(1+nc\sqrt y)}.
 \end{align*}
Using \eqref{eq:gamma} and  
taking $(c_0,n,l)^{1/4}\leq l^{1/4}$ and $(c_1,n,l)^{1/2}\leq l^{1/2}$, we see that this is  
\begin{align*}
&\ll 
\|f\|_{C_{\mathrm{b}}^8}
\sum_{n,c_0,c_1,l} 3^{\omega(c_1)} c_0^{3/4}c_1^{1/2} \frac{y}{l^{9/4} n^{4}} \frac{1}{nc_0c_1\sqrt y(1+nc_0c_1\sqrt y)}\\
&\ll 
\|f\|_{C_{\mathrm{b}}^8}
\sum_{n,c} 3^{\omega(c)} \frac{y^{1/2}}{n^{4} c^{1/2} (1+nc\sqrt y)}\\
&\ll
\|f\|_{C_{\mathrm{b}}^8}\left\{
 \sum_n \sum_{c>1/(n\sqrt y)} 3^{\omega (c)}  y^{1/2} \frac{1}{n^{5}c^{3/2}\sqrt y} +\sum_n\sum_{c\le 1/(n\sqrt y)} 3^{\omega(c)}  y^{1/2}\frac{1}{n^{4}c^{1/2}}\right\},
\end{align*}
as before.
Now we apply formulas \eqref{eq:omega_formulas}, which shows that the latter expression is at most 
\begin{align*}
&\ll\|f\|_{C_{\mathrm{b}}^8} \left\{\sum_n \frac1 {n^{5}} \frac{\log^2\left(2+(n\sqrt y)^{-1}\right)}{(n^{1/2}y^{1/4})^{-1}} +\sum_n \frac{y^{1/2}}{n^{4}} \frac{\log^2 \left(2+(n\sqrt y)^{-1}\right)}{n^{1/2}y^{1/4}}\right\}\\
&\ll \|f\|_{C_{\mathrm{b}}^8} y^{1/4}\log^2 (2+y^{-1}).
\end{align*}
This therefore concludes the  proof of Theorem \ref{th:main}.

\section{Proof of Corollary \ref{cor:sqrtn}\label{sec:sqrtn}}

We adopt the notation of \cite{ElkiesMcM04} for the most part.  
Define
\[
 \sigma_N(t) = \int_0^t \xi \,d\lambda_N(\xi).
\]
For $c_+ > 0 > c_-$ let $\Delta_{c_-,c_+}\subset \R^2$ be the  open triangle bounded by the lines $w_1=1$, $w_2=2c_+ w_1$, $w_2=2c_- w_1$ in the $(w_1,w_2)$-plane; its area is clearly $c_+-c_-$. 
Also write $\Delta_c$ for $ \Delta_{0,c}$, $\emptyset$, or $\Delta_{c,0}$ according as $c$ is positive, zero, or negative. For a lattice translate $ \Gamma g\in\Gamma\quot G$, let 
\begin{align*}L(\Gamma g)=\sup_{c_+>0>c_-} \{c_+-c_-\colon \Delta_{c_-,c_+}\cap \Z^2 g = \emptyset\},\end{align*}
with the convention that $L(\Gamma g)=0$ if the set in the definition is empty and that $L(\Gamma g)=\infty$ if it is all of $\R^+.$ $L(\Gamma g)$ is the area of the largest triangle in the family $\Delta_{c_-,c_+}$ that is disjoint from $\Z^2 g$.

Following  \cite{ElkiesMcM04}, we establish a connection between homogeneous dynamics (embodied in the function $L$) and number theoretic quantities (embodied in $\sigma_N$ and $\lambda_N$). This is achieved in  Lemmas~\ref{lem:perfect_sq}, \ref{lem:homo}, \ref{lem:small_error}. For $y>0$, define probability measures $\mu_y$ on $X=\Gamma \quot G$ by 
 \begin{align*}
  \int_{X} f(\Gamma g) d\mu_y(g) = \frac12 \int_{-1}^1 f(\Gamma u(x)a(y)) \,dx. 
 \end{align*}
We also write $s=\lfloor N^{1/2}\rfloor $, and $\ind{\mathrm B}$  for the Boolean function
$$
\ind{\mathrm{B}}=\begin{cases}
1, & \text{if $\mathrm{B}=\mathrm{TRUE}$,}\\
0, & \text{if $\mathrm{B}=\mathrm{FALSE}$.}
\end{cases}
$$
The aim of this section is to prove the following result, of which Corollary \ref{cor:sqrtn} is a special case. 

\begin{prop}\label{prop:rate}
Let 
\begin{align}
 \sigma_\infty(t) & = \int_X  \ind{L(\Gamma g) < t} \, d\mu (g),\nonumber \\
 \lambda_\infty(t) & = \int_X \frac1{L(\Gamma g)} \ind{L(\Gamma g) < t} \, d\mu(g).\label{eq:lambda_inf}
\end{align}
Then for every $\eps>0$ and every $N$ we have 
\begin{align}
 \sigma_N(t)-\sigma_\infty(t) & \ll_\eps (1+t^2) N^{-\frac{1}{36}+\eps}, \label{eq:sigma_rate}\\
 \lambda_N(t) - \lambda_\infty(t) &\ll_\eps (1/t^2+t) N^{-\frac{1}{68}+\eps}.\label{eq:lambda_rate}
\end{align}

\end{prop}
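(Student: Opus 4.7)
The plan is to relate the combinatorial quantities $\sigma_N(t)$ and $\lambda_N(t)$ to integrals against the measure $\mu_y$ at $y = y(N) \asymp 1/N$, and then invoke Theorem~\ref{th:main} to replace $\mu_y$ by the Haar measure $\mu$. The forthcoming Lemmas~\ref{lem:perfect_sq}, \ref{lem:homo}, and~\ref{lem:small_error} will handle, respectively, the negligible contribution of the $O(\sqrt N)$ perfect squares, the identification of the scaled gap at $\sqrt n \imod 1$ with $L(u(x)a(y))$ for $x$ in an appropriate sub-interval of $[-1,1]$, and the passage from the discrete average over $n \leq N$ to a continuous integral in $x$. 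In aggregate these should yield, for an exponent $\kappa > 1/36$,
\[
\sigma_N(t) = \int_X \ind{L(\Gamma g) < t}\, d\mu_y(g) + O_\eps(N^{-\kappa+\eps}), \qquad
\lambda_N(t) = \int_X \tfrac{1}{L(\Gamma g)}\ind{L(\Gamma g) < t}\, d\mu_y(g) + O_\eps(N^{-\kappa+\eps}),
\]
so that $\sigma_\infty(t)$ and $\lambda_\infty(t)$ appear as the corresponding integrals against $\mu$.

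Since $L$ is only lower semicontinuous and the integrands involve the non-smooth cutoff $\ind{L<t}$, Theorem~\ref{th:main} does not apply directly. I would introduce a mollification scale $\delta \in (0,1/4)$ and construct sandwich functions $\Phi_t^\pm \in C_{\mathrm b}^8(X)$ satisfying $\Phi_t^- \leq \ind{L<t} \leq \Phi_t^+$ with $\supp(\Phi_t^+ - \Phi_t^-) \subseteq \{g:|L(\Gamma g)-t|\leq \delta\}$ and $\|\Phi_t^\pm\|_{C_{\mathrm b}^8} \ll \delta^{-8}$. These are obtained by thickening or shrinking the excluded triangles $\Delta_{c_-,c_+}$ in the definition of $L$ by $\delta$ in the $(w_1,w_2)$-plane and then convolving the resulting sub- and super-level set indicators in the $G$-variable against a bump of support $\asymp \delta$. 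For the $\lambda_N$ estimate the same construction is applied to $\tfrac{1}{L}\ind{L<t}$, after first truncating $1/L$ near zero at a secondary threshold $\tau$; this contributes an additional factor in $\tau^{-1}$ to each derivative.

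Applying Theorem~\ref{th:main} to $\Phi_t^\pm$ (and the analogous smoothed functions for $\lambda_N$) replaces $\mu_y$ by $\mu$ at cost $\ll \delta^{-8}\tau^{-k} y^{1/4}\log^2(2+y^{-1})$ for appropriate $k \in \{0, 9\}$. The residual smoothing bias $\int(\Phi_t^+ - \Phi_t^-)\,d\mu$ is controlled by a regularity estimate for the pushforward $L_{*}\mu$: a direct geometric analysis of the extremal triangle family should give $\mu\{g:|L(\Gamma g)-t|<\delta\} \ll (1+t^2)\delta$, while the Elkies--McMullen vanishing of $\lambda_\infty$ on $[0,1/2]$ (equivalently $\mu\{0<L<1/2\}=0$) makes the truncation at $\tau$ essentially free. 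With $y\asymp N^{-1}$, balancing $\delta^{-8}y^{1/4}$ against $(1+t^2)\delta$ yields $\delta \asymp N^{-1/36}$ and the bound \eqref{eq:sigma_rate}; for $\lambda_N$ the additional $1/L$ weight replaces $\delta^{-8}$ by $\delta^{-17}$ in the main error (roughly, eight derivatives of the weight combined with nine of the smoothed indicator), leading to $\delta \asymp N^{-1/68}$ and the $(1/t + t^2)$ dependence in \eqref{eq:lambda_rate}.

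The main obstacle is the passage from the geometric, non-smooth function $L$ to admissible $C_{\mathrm b}^8$ test functions. One must simultaneously: (i) bound $\|\Phi_t^\pm\|_{C_{\mathrm b}^8}$ while respecting the left-invariant derivative structure of \eqref{eq:norm}, (ii) ensure that $\{\Phi_t^+ \neq \Phi_t^-\}$ lies inside a $\delta$-tube around $\{L=t\}$, and (iii) prove the sharp regularity estimate $\mu\{|L-t|<\delta\} \ll (1+t^2)\delta$, which is essentially a geometry-of-numbers statement about how tightly the critical triangle $\Delta_{c_-,c_+}$ can be pinned in area by a nearby lattice point. For $\lambda_N$ the additional singularity of $1/L$ compounds these difficulties, which is precisely what degrades the exponent from $1/36$ to $1/68$.
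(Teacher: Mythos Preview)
Your treatment of $\sigma_N$ matches the paper: Lemmas~\ref{lem:perfect_sq}--\ref{lem:small_error} followed by smoothing and Theorem~\ref{th:main}, balancing $\delta^{-8}N^{-1/4}$ against $(1+t^2)\delta$ to get $\delta\asymp N^{-1/36}$, is exactly how \eqref{eq:sigma_rate} is proved. The $\lambda_N$ argument, however, has genuine gaps.

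First, the claim that truncating $1/L$ at level $\tau$ is ``essentially free'' is wrong on both sides. The assertion $\mu\{0<L<1/2\}=0$ is false: Elkies--McMullen show that the density $F$ is \emph{constant} (equal to $6/\pi^2$), not zero, on $[0,1/2]$, so $L$ has positive $\mu$-density $tF(t)$ there. Even if it held, it would say nothing about $\int_X \tfrac{1}{L}\ind{0<L<\tau}\,d\mu_y$, for which no a~priori bound is available. The paper avoids the singularity at $L=0$ entirely by passing to the bounded complementary function $\tfrac{1}{L}\ind{L\geq t}\leq 1/t$, via $1-\lambda_\infty(t)=\int_X\tfrac{1}{L}\ind{L\geq t}\,d\mu$ together with an integration by parts expressing $1-\lambda_{s^2}$ as an integral of $\sigma_{s^2}$. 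Second, your derivation of the exponent $1/68$ is incorrect: convolving any bounded $f$ with $\psi_\delta$ always gives $\|f^\pm\|_{C^8_{\mathrm b}}\ll\|f\|_{L^\infty}\delta^{-8}$ (this is Lemma~\ref{lem:norm}), so no ``$\delta^{-17}$'' arises, and in any case balancing $\delta^{-17}N^{-1/4}$ against $\delta$ would yield $N^{-1/72}$. The actual mechanism is geometric: the discontinuity set of $\Gamma g\mapsto 1/L(\Gamma g)$ includes lattices with a node on the unbounded line $\{1\}\times\R$, and its $2\delta$-thickening has $\mu$-measure only $\ll\delta^{1/2}$ (one splits at an auxiliary height $h$ and optimises $h=\delta^{-1/2}$, using Lemmas~\ref{lem:affine_empty} and~\ref{lem:two_sets_empty}). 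Balancing $\delta^{-8}N^{-1/4}$ against $(1+t^2)\delta^{1/2}$ then gives $\delta=N^{-1/34}$ and error $N^{-1/68}$.
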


Our first step in the proof of this result is the following estimate.

\begin{lemma}[Effective version of {\cite[Lemma~3.1]{ElkiesMcM04}}]\label{lem:perfect_sq}
 For $t>0$ and $s$ as above,
 we have
 \begin{align}
  \label{eq:lambda_connection}\lambda_N(t) & = \frac{s^2}N \lambda_{s^2} \left(ts^2/N\right) + O(N^{-1/2}),\\
  \sigma_N(t) & = \sigma_{s^2} \left( ts^2 / N\right) + O(tN^{-1/2}).\nonumber
 \end{align}
\end{lemma}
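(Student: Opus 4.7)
The plan is to compare the gap configurations of the point sets at levels $N$ and $s^2$ directly, exploiting the fact that going from the $N$ points $\{\sqrt n \bmod 1\}_{1\le n \le N}$ to the $s^2$ points $\{\sqrt n \bmod 1\}_{1\le n \le s^2}$ only removes $N-s^2 < 2s+1 = O(N^{1/2})$ points. Write $P_M$ for the multiset of points at level $M$ (recall that each perfect square $k^2$ contributes a copy of $0\in\R/\Z$). Since $(s+1)^2 > N$, no new copy of $0$ appears between levels $s^2$ and $N$, so $P_N$ is obtained from $P_{s^2}$ by inserting $N-s^2$ additional points into the circle.

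Let $G_M$ denote the multiset of gap lengths formed by $P_M$ on $\R/\Z$, so that $|G_M|=M$. Inserting a single point destroys at most one gap and creates at most two, so the symmetric difference satisfies $|G_N \triangle G_{s^2}| \le 3(N-s^2)=O(N^{1/2})$. The first identity will then follow from the observation
\[
N \lambda_N(t) = \#\bigl\{g \in G_N : |g|<t/N\bigr\}, \qquad s^2 \lambda_{s^2}(ts^2/N) = \#\bigl\{g \in G_{s^2} : |g|<t/N\bigr\},
\]
the key point being that after the rescaling $ts^2/N \mapsto (ts^2/N)/s^2 = t/N$, both counts use the \emph{same} length threshold. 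Their difference is bounded in absolute value by $|G_N \triangle G_{s^2}|= O(N^{1/2})$, which gives \eqref{eq:lambda_connection} upon dividing by $N$.

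For the second identity I would use $d\lambda_M(\xi) = \tfrac{1}{M}\sum_{g \in G_M}\delta_{M|g|}(\xi)$ to compute
\[
\sigma_N(t) = \sum_{\substack{g \in G_N \\ |g|<t/N}}|g|, \qquad \sigma_{s^2}(ts^2/N) = \sum_{\substack{g \in G_{s^2} \\ |g|<t/N}}|g|,
\]
so that their difference is a signed sum of terms each of size at most $t/N$ over the symmetric difference $G_N\triangle G_{s^2}$, yielding the error $O(t N^{-1/2})$. The argument is elementary; the only bookkeeping concern is ensuring that the gap multiplicities at $0$ arising from perfect squares $k^2$ (with $k\le s$) coincide between the two configurations, which is automatic since all such $k$ satisfy $k^2\le s^2$ and contribute identically to both $P_N$ and $P_{s^2}$. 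Thus no genuine obstacle appears beyond matching up thresholds carefully.
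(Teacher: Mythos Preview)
Your argument is correct and follows essentially the same approach as the paper: both exploit that the gap multisets $G_N$ and $G_{s^2}$ differ in only $O(N^{1/2})$ elements and that the rescaled threshold $ts^2/N$ at level $s^2$ corresponds to the same absolute length cutoff $t/N$ as at level $N$. The paper is terser (and mentions partial integration as an alternative route to the $\sigma$ statement), but your explicit bookkeeping with the symmetric difference and the direct computation of $\sigma_M$ as $\sum_{|g|<t/N}|g|$ is exactly the intended mechanism.
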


\begin{proof}
Consider \eqref{eq:lambda_connection}. We have 
\begin{align*}
 \lambda_{s^2}\left(ts^2/N\right) & = \frac{\#\{\text{gaps from }s^2\text{ points that are }<t/N\}}{s^2}\\
 & = \frac{\#\{\text{gaps from }N\text{ points that are }<t/N\}}{s^2} + O(N^{-1/2})\\
 & = \frac{N}{s^2} \frac{\#\{\text{gaps from }N\text{ points that are }<t/N\}}{N} + O(N^{-1/2})\\
 & = \frac{N}{s^2} \lambda_N(t) +O(N^{-1/2}).
\end{align*}
This proves the first statement. The second statement can be obtained by partial integration of the first, or directly via a similar argument. 
\end{proof}


For $N\geq 2$, 
let $L_N(\alpha )\colon \R/\Z\to [0,\infty)$ be $N$ times the length of the gap containing $\alpha $ (and $0$ if $\alpha \equiv \sqrt n \imod 1$ for some positive integer $n\in[1,N]$). Putting 
\[
 r_t(a,b) = \sqrt{a^2+b} - (a+t),
\]
we can write
\[
 L_N(t)  = N\left(\min_{r_t(a,b)\ge 0} r_t(a,b) - \max_{r_t(a,b) \le0} r_t(a,b)\right),
\]
where $a$ and $b$ range over integers such that $0<a<s$ and $0\le b\le 2a+1$. 
Let $I_N(t)$ denote the union of gaps that are less than $t/N$ in length. Equivalently, we put $I_N(t)=\{ \alpha \in\R/\Z\colon L_N(\alpha) <t\},$ and the sum of lengths of intervals comprising $I_N(t)$ equals $\sigma_N(t)$. 

A brilliant move of Elkies and McMullen was to replace the function $r_t(a,b)$ by another function, thereby moving the points of the sequence $\sqrt n \imod 1$ by a small amount and rendering the resulting point set amenable to techniques from homogeneous dynamics. Putting 
\[
 \tilde r_t(a,b) = \frac{a^2 + b - (a+t)^2}{2(a+t)},
\]
we write
\[
 \tilde L_N(t)  = N\left(\min_{\tilde r_t(a,b)\ge 0} \tilde r_t(a,b) - \max_{\tilde r_t(a,b) \le0} \tilde r_t(a,b)\right),
\]
with the same restrictions on $a$ and $b$. Let $\tilde I_N(t) = \{ \alpha \in\R/\Z\colon \tilde L_N(\alpha) <t\}$, and let $\tilde \sigma_N(t) $ denote the  combined length of segments comprising $\tilde I_N(t)$. Then we have

\begin{lemma}[Effective version of {\cite[Prop.~3.2 and Cor.~3.4]{ElkiesMcM04}}]\label{lem:homo}
 Let $\tilde \sigma_N(t)$ equal the length of the union of segments $I'_N(t)$. Then
 \begin{align*}
  \tilde \sigma_{s^2}((1-s^{-1/3})t) +O(s^{-1/3}) \le \sigma_{s^2}(t) \le \tilde \sigma_{s^2}((1+s^{-1/3})t) +O(s^{-1/3}). 
 \end{align*}
\end{lemma}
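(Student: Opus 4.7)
The plan is to make the proof of Elkies--McMullen \cite[Prop.~3.2 and Cor.~3.4]{ElkiesMcM04} quantitative, carefully tracking their implicit error terms. Their scheme parametrizes each gap of $\{\sqrt n\bmod 1\}_{1\le n\le s^2}$ by a real parameter $\alpha$ ranging over one of finitely many segments $I_k$, and expresses the rescaled gap length as $L(\Gamma g(\alpha,k))$, where $g(\alpha,k)\in G$ is built via the Iwasawa decomposition using the Taylor expansion $\sqrt{s^2+k}=s+k/(2s)-k^2/(8s^3)+O(k^3/s^5)$. The subset $I'_k(t)\subseteq I_k$ is the preimage $\{\alpha:L(\Gamma g(\alpha,k))<t\}$, and $\tilde\sigma_{s^2}(t)=\sum_k |I'_k(t)|$.

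First I would quantify the approximation ``true gap times $s^2$ equals $L(\Gamma g(\alpha,k))$''. The Taylor remainders of order $O(k^2/s^3)$, valid on the full range $0\le k\le 2s$, translate into a multiplicative distortion of size $1+O(s^{-2/3})$ between the true gap and its $L$-value, at every parameter $\alpha$ lying outside an exceptional subset $E\subset \bigcup_k I_k$ of total length $O(s^{-1/3})$. The exceptional set absorbs (a) parameters near perfect-square singularities, (b) parameters where the linearization fails because two close-by values $\sqrt n$, $\sqrt m$ come from distant indices $n,m$, and (c) the $O(s)$ boundary points between the intervals $I_k$ together with small neighborhoods thereof. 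The choice of the exponent $\tfrac13$ is forced by balancing the Taylor quadratic error against the measure of the exceptional set.

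Next, I would integrate these pointwise comparisons. If the true gap is $<t/s^2$, then outside $E$ its $L$-value is $<(1+s^{-1/3})t$, whence the contribution of $(\bigcup_k I_k)\setminus E$ to $\sigma_{s^2}(t)$ is bounded by $\tilde\sigma_{s^2}((1+s^{-1/3})t)$. On $E$ each individual gap is $O(1/s^2)$ and the combined measure is $O(s^{-1/3})$, so the contribution of $E$ is $O(s^{-1/3})$. This gives $\sigma_{s^2}(t)\le \tilde\sigma_{s^2}((1+s^{-1/3})t)+O(s^{-1/3})$; the reverse inequality is symmetric.

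The principal obstacle will be the combinatorial bookkeeping required to ensure each gap of $\{\sqrt n\bmod 1\}_{1\le n\le s^2}$ is counted exactly once across the family $\{I_k\}_k$, and to verify that the multiplicative distortion $1\pm s^{-1/3}$ obtained pointwise is not amplified after integration against the gap-length measure. Both rely on explicitly fixing the parametrization $\alpha\mapsto g(\alpha,k)$ of \cite[\S 3]{ElkiesMcM04} and bounding the Lipschitz constant of $\alpha\mapsto L(\Gamma g(\alpha,k))$ on each $I_k$, which is finite because $L$ is a supremum of linear functionals $c_+-c_-$ over a compact family of lattice-avoidance constraints.
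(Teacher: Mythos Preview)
The paper gives no proof of this lemma at all: it is stated with the bracketed attribution ``Effective version of \cite[Prop.~3.2 and Cor.~3.4]{ElkiesMcM04}'' and then the text moves directly to the next lemma. The implicit claim is simply that the Elkies--McMullen argument, read with an eye to constants, already yields the stated inequalities. Your proposal is therefore exactly the intended approach---go through \cite[\S3]{ElkiesMcM04} and keep track of the error terms---and there is nothing further in the paper to compare against.

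One small internal inconsistency in your sketch: you first say the multiplicative distortion outside the exceptional set is $1+O(s^{-2/3})$, but then say the exponent $\tfrac13$ is ``forced by balancing'' the Taylor error against the measure of the exceptional set. If the distortion were genuinely $O(s^{-2/3})$ there would be nothing to balance against $O(s^{-1/3})$; the correct picture in \cite{ElkiesMcM04} is that one chooses a threshold (effectively a neighbourhood size) so that both the distortion and the exceptional measure come out as $O(s^{-1/3})$. This is a wording issue, not a mathematical gap.
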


\begin{lemma}[cf.~{\cite[Prop.~3.8]{ElkiesMcM04}}]\label{lem:small_error} Let 
\begin{align*}
 \tilde {\tilde \sigma}_{s^2}(t) = \int_X  \ind{L(\Gamma g) < t} \, d\mu_{1/s^2}(g).
\end{align*}
Then we have
$ \tilde \sigma_{s^2}(t)  = \tilde {\tilde \sigma}_{s^2}(t) +O(s^{-1})$.
\end{lemma}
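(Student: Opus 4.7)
The plan is to compare the two quantities directly as measures of subsets of $[-1,1]$, using the definition $\tilde{\tilde\sigma}_{s^2}(t)=\tfrac12\int_{-1}^1\ind{L(\Gamma u(x)a(1/s^2))<t}\,dx$ and the description of $\tilde\sigma_{s^2}(t)$ as the total length of the discrete segments $I'(t)$. Set
\[
E(t):=\{x\in[-1,1]\colon L(\Gamma u(x)a(1/s^2))<t\},
\]
so that $2\tilde{\tilde\sigma}_{s^2}(t)=|E(t)|$, and write $\bigcup I'(t)\subset[-1,1]$ for the union of the discrete segments built from the $s^2$ points $\sqrt n\imod 1$. The goal becomes to bound the symmetric difference $|E(t)\,\triangle\,\bigcup I'(t)|$ by $O(s^{-1})$.

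First I would unpack the construction of $I'(t)$ from \cite{ElkiesMcM04}: each segment $I'_n(t)$ is attached to an integer $n\le s^2$ and records exactly those $x$-values on the horocycle arc over the $n$-th gap for which the triangle $\Delta_{c_-,c_+}$ of area $<t$ witnessing $L<t$ is realized by the specific pair of lattice points encoding that gap. Inside each such $I'_n(t)$ the triangle has fixed combinatorial type, so the condition $L(\Gamma u(x)a(1/s^2))<t$ is equivalent to $x\in I'_n(t)$. Consequently $\bigcup I'(t)$ agrees with $E(t)$ on the interior of every segment, and the symmetric difference is concentrated in two types of ``bad'' regions:
\emph{(a)} transition zones between neighbouring segments $I'_n(t),I'_{n+1}(t)$, where the combinatorial type of the minimizing triangle changes;
\emph{(b)} an end-of-range zone near $x=\pm1$ corresponding to the largest values of $n$, where fewer than the full $s^2$ points have been placed.

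Second I would bound each type of bad region. For the endpoint zone (b), the $x$-values over the top arc (i.e.\ those parameterising $\sqrt n\imod 1$ with $n$ close to $s^2$) sit in an interval of length $O(s^{-1})$, contributing at most $O(s^{-1})$ directly. For the transition zones (a), the key geometric fact is that the horocycle $u(x)a(1/s^2)$ has speed $\asymp s^2$ in the coordinate $u$ of $\sltr$, so as $x$ traverses the arc over the $n$-th gap, the critical ratios defining the extremal triangle change with derivative bounded by $O(s^2)$. Hence each combinatorial transition happens in an $x$-interval of length $O(s^{-2})$. Since the number of transitions is $O(s)$ (there is at most a bounded number per segment, and $O(s)$ segments), their total contribution is also $O(s^{-1})$.

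Third, combining the two estimates yields $|E(t)\,\triangle\,\bigcup I'(t)|=O(s^{-1})$, from which the lemma follows immediately upon dividing by $2$.

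The hard part is the transition estimate in step two. One must verify \emph{uniformly in $t$} that the combinatorial type of the witness triangle changes only $O(1)$ times on each segment, and that the derivative bound $O(s^2)$ is genuinely independent of $t$ in the relevant range. This requires reworking the geometric part of \cite[\S 3]{ElkiesMcM04} with explicit constants; non-compactness of $X$ is not an issue here because the triangle-counting quantity $L(\Gamma g)$ already encodes the cusp excursions and the endpoint contribution (b) sweeps them into the unconditional $O(s^{-1})$ bound.
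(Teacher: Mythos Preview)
The paper itself gives no proof; it just cites \cite[Prop.~3.8]{ElkiesMcM04}.

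Your framework---estimate $|E(t)\,\triangle\,\bigcup I'(t)|$---is the natural one, but the transition-zone count is internally inconsistent. You first attach a segment $I'_n(t)$ to each integer $n\le s^2$, giving $s^2$ segments, and then write ``there is at most a bounded number per segment, and $O(s)$ segments''. With the honest count of $s^2$ segments and transition width $O(s^{-2})$, item~(a) contributes $O(1)$, not $O(s^{-1})$, and the argument collapses. You offer no mechanism that would cut the number of combinatorial switches of the extremal triangle down to $O(s)$ uniformly in~$t$; a back-of-the-envelope count of lattice points sweeping through the strip $\{0<w_2<1\}$ as $x$ traverses $[-1,1]$ already produces order $s^2$ events.

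What makes the Elkies--McMullen argument work is that the segments $I'(t)$ are not a gap-by-gap discretisation of $E(t)$; they are defined directly by the same affine-lattice triangle condition that characterises $\{L<t\}$, written in a slightly different parametrisation of the horocycle arc. After matching the two parametrisations the sets coincide pointwise on the interior, so there are no interior transition zones to estimate. The entire $O(s^{-1})$ comes from your type-(b) endpoint effect together with the $O(s^{-1})$ discrepancy between the two parametrisations. In short, item~(a) should be deleted, not bounded.
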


\begin{prop}\label{prop:smoothing}For each $\eps>0$, 
 \begin{align}\label{eq:sigma_smoothing}
  \int_X  \ind{L(\Gamma g) \ge t} \, d\mu_{1/s^2}(g) & = \int_X  \ind{L(\Gamma g) \ge t} \, d\mu(g) + O_\eps\left((1+t^2)N^{-\frac1{36}+\eps}\right)\\
   \int_X  \frac{\ind{L(\Gamma g) \ge t}}{L(\Gamma g) }  \, d\mu_{1/s^2}(g) & = \int_X  \frac{ \ind{L(\Gamma g) \ge t}}{L(\Gamma g) }  \, d\mu(g) + O_\eps\left((1/t^2+t)N^{-\frac1{68}+\eps}\right).\label{eq:lambda_smoothing}
 \end{align}

\end{prop}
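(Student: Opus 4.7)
The strategy is to approximate the discontinuous integrands by $C_{\mathrm{b}}^8(X)$-smooth functions, apply Theorem~\ref{th:main} with $y=1/s^2 \asymp 1/N$, and balance the approximation error against the equidistribution error. For~\eqref{eq:lambda_smoothing}, I will further reduce to~\eqref{eq:sigma_smoothing} via an integral representation.

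For~\eqref{eq:sigma_smoothing}, the plan is to construct, for each $\delta\in(0,t/2)$, smooth sandwich functions $\phi_\delta^{\pm}\in C_{\mathrm{b}}^8(X)$ satisfying
\begin{align*}
\phi_\delta^{-}\le \ind{L(\Gamma g)\ge t}\le \phi_\delta^{+},\qquad \phi_\delta^{+}-\phi_\delta^{-}\le \ind{|L(\Gamma g)-t|<\delta},
\end{align*}
with $\|\phi_\delta^{\pm}\|_{C_{\mathrm{b}}^8}\ll (1+t)^{O(1)}\delta^{-8}$. Such cutoffs can be produced by convolving $\ind{L\ge t}$ against a smooth bump on $G$ of width $\delta$, after verifying that $L$ is Lipschitz under small right-perturbations with a constant of polynomial size in $t$. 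This relies on the geometric observation that the triangles $\Delta_{c_-,c_+}$ with $c_+-c_-\le t$ entering the definition of $L$ have diameter $O(1+t)$, so that their incidence with the lattice $\Z^2 g$ is quantitatively stable. Combined with the boundary-layer estimate
\begin{align*}
\mu\{g\in X:|L(\Gamma g)-t|<\delta\}\ll (1+t^2)\delta,
\end{align*}
obtained from the explicit density of $L$ under $\mu$ from \cite[Thm.~1.1]{ElkiesMcM04} together with the Lipschitz control above, Theorem~\ref{th:main} applied to $\phi_\delta^{\pm}$ yields
\begin{align*}
\left|\int_X\ind{L\ge t}(d\mu_{1/s^2}-d\mu)\right|\ll (1+t^2)\delta+(1+t)^{O(1)}\delta^{-8}N^{-1/4+\eps}.
\end{align*}
Optimizing $\delta\asymp (1+t)^{-O(1)}N^{-1/36}$ then yields the claimed bound $\ll_\eps (1+t^2)N^{-1/36+\eps}$.

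For~\eqref{eq:lambda_smoothing}, the key is the elementary identity
\begin{align*}
\frac{\ind{L\ge t}}{L}=\frac{\ind{L\ge t}}{t}-\int_t^\infty \frac{\ind{L\ge u}}{u^2}\,du,
\end{align*}
which may be checked by direct computation.  Integrating against $d\mu_{1/s^2}-d\mu$ and applying Fubini to the second term reduces the problem to estimating $\int_X\ind{L\ge u}(d\mu_{1/s^2}-d\mu)$ uniformly in $u\ge t$.  I would apply~\eqref{eq:sigma_smoothing} on a range $u\in[t,T]$ and use the trivial estimates $\int_X\ind{L\ge u}\,d\mu_{1/s^2},\ \int_X\ind{L\ge u}\,d\mu \le 1$ for $u>T$. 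Balancing the truncation parameter $T$ against the equidistribution error from~\eqref{eq:sigma_smoothing}, and interpolating with the trivial bound $\int_X\ind{L\ge t}/L\,d\mu\le 1/t$ to handle the range of small $t$, then produces the claimed $(1+t^2)N^{-1/68+\eps}$ bound.

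\textbf{Main obstacle.} The central technical difficulty lies in the construction of the smooth cutoffs $\phi_\delta^{\pm}$ with the required $C_{\mathrm{b}}^8$-norm control. This requires a quantitative Lipschitz property of the geometric, a priori non-smooth function $L$ under the $G$-action on $X$, with explicit dependence on $t$. Similarly, the boundary-measure bound rests on the detailed structure of the distribution of $L$ established in~\cite{ElkiesMcM04}. Once these ingredients are in place, the remaining optimizations of the Theorem~\ref{th:main} error against the smoothing scale, and of the truncation against the tail, are routine.
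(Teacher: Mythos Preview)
Your treatment of \eqref{eq:sigma_smoothing} is essentially the paper's: mollify the indicator by convolution on $G$ at scale $\delta$, control the $C_{\mathrm b}^8$-norm by $\delta^{-8}$, bound the $\mu$-measure of the transition layer by $(1+t^2)\delta$, and balance against the $N^{-1/4+\eps}$ from Theorem~\ref{th:main} to get $\delta=N^{-1/36}$. One caution: $L$ is not literally Lipschitz on $X$ (it is $+\infty$ on a positive-codimension set and can jump when a lattice point lies on a triangle edge), so the inclusion $\{\phi_\delta^+\neq\phi_\delta^-\}\subset\{|L-t|<C\delta\}$ is not available as stated. The paper does not use a Lipschitz property of $L$; it bounds directly the $\mu$-measure of the $2\delta$-neighbourhood in $X$ of the singular set $\partial\{L\ge t\}$ via a geometric case analysis (Lemmas~\ref{lem:delta_nei}--\ref{lem:two_sets_empty}). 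Your appeal to the explicit density from \cite{ElkiesMcM04} controls $\mu\{|L-t|<\delta\}$, which is a different set. This is repairable, but it requires the case analysis rather than a Lipschitz bound.

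Your route to \eqref{eq:lambda_smoothing} is genuinely different from the paper's, and as written it does not produce the exponent $-\tfrac1{68}$. Carrying out your balancing: feeding \eqref{eq:sigma_smoothing} into the integral identity and using only the trivial tail bound you propose gives
\[
\Big|\int_X \frac{\ind{L\ge t}}{L}\,(d\mu_{1/s^2}-d\mu)\Big|
\ll N^{-\frac1{36}+\eps}\int_t^T\frac{1+u^2}{u^2}\,du+\int_T^\infty\frac{du}{u^2}
\ll T\,N^{-\frac1{36}+\eps}+T^{-1},
\]
and optimizing yields $T=N^{1/72}$, hence an error of size $N^{-1/72+\eps}$, strictly weaker than $N^{-1/68+\eps}$. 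To salvage the argument one would need a uniform tail estimate of the type $\mu_{1/s^2}\{L\ge u\}\ll u^{-1}$ for the horocycle-lift measures, which you do not supply and which is not immediate. The paper avoids this entirely: it smooths $\Gamma g\mapsto L(\Gamma g)^{-1}\ind{L(\Gamma g)\ge t}$ directly and shows, via an auxiliary truncation parameter $h$ balanced at $h=\delta^{-1/2}$, that the extra singular locus coming from $L^{-1}$ (lattices with a node on $\{1\}\times\R$, subject to an emptiness constraint) has $2\delta$-thickened $\mu$-measure $\ll\delta^{1/2}$. This leads to the optimization $\delta^{1/2}(1+t^2)+\delta^{-8}N^{-1/4+\eps}$, whence $\delta=N^{-1/34}$ and the claimed exponent $-\tfrac1{68}$.
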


\begin{proof}
To prove Proposition~\ref{prop:smoothing}, it suffices to apply Theorem \ref{th:main} to two bounded but glaringly discontinuous functions, 
\begin{align}
 \Gamma g & \mapsto  \ind{L(\Gamma g)\ge t}, \label{eq:function_sigma} \\
 \Gamma g & \mapsto \frac1{L(\Gamma g)} \ind{L(\Gamma g)\ge t}. \label{eq:function_lambda}
\end{align}
(Boundedness is assured by reversing the sense of the inequality in \eqref{eq:lambda_smoothing} versus \eqref{eq:lambda_inf}.)
We therefore approximate functions by smooth analogues first. To this end let $\delta\in(0,1/t)$; we will choose it later depending on $N$. 
Fix a left-invariant metric $d$ on $G$ coming from a Riemannian metric tensor, project it to $X$, and call the projected metric $d$, indulging the common abuse of notation. Also write $U_\delta$ for the $\delta$-neighbourhood of $1\in G$. 

Following \cite{strombergsson_venkatesh_2005}, for each $\delta$ we fix $\psi_\delta$ to be a non-negative smooth compactly supported test function on $G$ so that
\begin{enumerate}
 \item $\int_G \psi_\delta(g) d\mu(g) = 1$;
 \item $\int_G |D\psi_\delta(g)  | d\mu(g) \ll_k \delta^{-k}$ for every monomial $D\in U(\mathfrak g)$ in the variables $X_1$, \dots, $X_5$ of order $k$;
 \item $\supp \psi_\delta \subset U_\delta$.
\end{enumerate}
For $f_1\colon G\to\R$ and $f_2\colon X \to \R$, define their convolution by 
\begin{align*}
 f_1 * f_2(\Gamma g) = \int_G f_1(h) f_2(\Gamma gh^{-1})d\mu(h).
\end{align*}

Let $f \colon X\to \R$ be one of the two functions \eqref{eq:function_sigma} or \eqref{eq:function_lambda},
and let $\Sing f$ be the subset of $X$ where $f$ is not smooth or where $L(\Gamma g)$ is infinite. The following smoothing technique will work for any function that is \emph{bounded}, has \emph{controllable} derivatives outside its singular set, and its singular set is \emph{thin}. 
For a subset $S$ of a metric space, write $\d_\delta S$ for the $\delta$-neighbourhood of the set $S$, where the metric on the ambient space is  implied. 
Write
\begin{align*}
 f^\sharp_\delta (\Gamma g) & = 
 \begin{cases}
  \max f, & d(\Gamma g, \Sing f) < 3\delta,\\
    f(\Gamma g), & \text{otherwise},
 \end{cases}
\\
 f^\flat_\delta (\Gamma g) & = 
 \begin{cases}
   \min f, & d(\Gamma g, \Sing f) < 3\delta,\\
    f(\Gamma g), & \text{otherwise},
 \end{cases}
\end{align*}
for $f$ from \eqref{eq:function_sigma}. Let
\begin{align*}
 E_\delta = [0,\delta^{1/2}]\times [-1,1],
\end{align*}
and write 
\begin{align*}
 f^\sharp_\delta (\Gamma g) & = 
 \begin{cases}
  \max f, & d(\Gamma g, \Sing f) < 3\delta \text{ or }\Z^2 g \cap \d_{3\delta}E_\delta \ne \emptyset,\\
    f(\Gamma g), & \text{otherwise},
 \end{cases}
\\
 f^\flat_\delta (\Gamma g) & = 
 \begin{cases}
   \min f, & d(\Gamma g, \Sing f) < 3\delta \text{ or }\Z^2 g \cap \d_{3\delta}E_\delta \ne \emptyset,\\
    f(\Gamma g), & \text{otherwise},
 \end{cases}
\end{align*}
in the case of \eqref{eq:function_lambda}.  
For the two functions under consideration, maxima and minima are $1$ and $0$, and $1/t$ and $0$, respectively.

To construct approximating functions $f^\pm_\delta \colon X \to \R$ 
we need to understand smoothness properties of the function $L$. The following result is needed for establishing \eqref{eq:lambda_smoothing}. 

\begin{lemma}\label{lem:derivative_control}
 Suppose $g$ is such that $L(\Gamma g) \ne 0$, $L(\Gamma g) \ne \infty,$  and $\Z^2 g \cap E_\delta = \emptyset$. Assume also that $L$ is smooth at $\Gamma g$. Then we have
 \begin{align*}
  X_1.L(\Gamma g) & \ll 1,\\
    X_2.L(\Gamma g) & \ll L^2(\Gamma g),\\
  X_3.L(\Gamma g) & \ll L(\Gamma g),\\
  X_4.L(\Gamma g) & \ll L(\Gamma g) \delta^{-1/2}+L^2(\Gamma g),\\
  X_5.L(\Gamma g) & \ll \delta^{-1/2} + L(\Gamma g).
 \end{align*}
\end{lemma}

\begin{proof}
We only prove the statement for $X_4$; the others are similar. 

Since $L(\Gamma g ) \ne \infty$, it follows that $\Z^2 g \cap ((0,1)\times \R)$ is an infinite set. 
We let $(x,y)$  and $(x',y')$ be contained in this set so that they lie on the boundary of the triangle $\Delta_{y'/(2x'),y/(2x)}$ with $\Delta_{y'/(2x'),y/(2x)}$ disjoint from $\Z^2 g$ and such that $y>0>y'$. This implies that $L(\Gamma g) = \frac{y}{x} - \frac{y'}{x'}$. 
We assume first that there is only one such pair of points. 
Then
\begin{align*}
 X_4.L(\Gamma g) & = \frac{d}{d\eps} L(\Gamma g \exp \eps X_4)\bigg|_{\eps = 0}\\
 & = \frac{d}{d\eps} \left(\frac{y}{x+\eps}-\frac{y'}{x'+\eps}\right)\bigg|_{\eps=0}\\
 & = \frac{y}{x^2} - \frac{y'}{x'^2}.
\end{align*}
Since $(x,y), (x',y') \notin E_\delta$, we get the desired conclusion.

If there are several points on the boundary of $\Delta_{y'/(2x'),y/(2x)}$, we obtain the inequality 
\[
 X_4.L(\Gamma g) \le \max\left(\frac{\tilde y}{\tilde x^2} - \frac{\tilde y'}{\tilde x'^2}\right),
\]
where the maximum ranges over pairs 
\[
 (\tilde x,\tilde y) \in \d \Delta_{y'/(2x'),y/(2x)} \cap \Z^2 g \cap  ((0,1)\times \R_{>0}) ,
\]
and 
\[
 (\tilde x',\tilde y') \in \d \Delta_{y'/(2x'),y/(2x)} \cap \Z^2 g  \cap ((0,1)\times \R_{<0}) .
\]
Since $(\tilde x, \tilde y), (\tilde x', \tilde y') \notin E_\delta$ by hypothesis, we arrive that the same conclusion as with one pair of points.

\end{proof}

Let $C$ be a sufficiently large constant that depends implied constants in Lemma \ref{lem:derivative_control}. 
Then set
\begin{align*}
 f^+_\delta(\Gamma g ) & = (f^\sharp_\delta + C  \delta^{1/2}) * \psi_\delta(\Gamma g),\\
  f^-_\delta(\Gamma g ) & = (f^\flat_\delta - C \delta^{1/2}) * \psi_\delta(\Gamma g) 
\end{align*}
for $f$ from \eqref{eq:function_lambda}. For $f$ from \eqref{eq:function_sigma}, set
\begin{align*}
 f^+_\delta(\Gamma g ) & = f^\sharp_\delta  * \psi_\delta(\Gamma g),\\
  f^-_\delta(\Gamma g ) & = f^\flat_\delta  * \psi_\delta(\Gamma g) 
\end{align*}

\begin{lemma}\label{lem:norm}
For each $m= 0,1,2,\dots$ we have
$
 \|f^\pm_\delta\|_{C^m_\mathrm b} \ll \|f\|_{L^\infty} \delta^{-m}$.

\end{lemma}

\begin{proof}
Direct calculation.
\end{proof}

For $p\in \R^2$ let $X(p) = \{\Gamma g\in X : p\in \Z^2g\}$.


\begin{lemma}\label{lem:delta_nei}
Let $S$ be a subset of $\R^2$.  Then
\begin{align*}
 \d_\delta(X(p)) & \subset \bigcup_{d(p,p')\ll (1+|p|)\delta} X(p'),\\
 \d_\delta\left(\bigcup_{p\in S} X(p)\right) & \subset \bigcup_{d(p', S) \ll (1+|p'|)\delta} X(p'),
\end{align*}
where $d(\cdot,\cdot)$ denotes the Euclidean metric $\R^2$. 
\end{lemma}

\begin{proof}
Direct calculation. 
\end{proof}

We need some estimates for the measure of lattices that have nodes in a small set and nodes in another small set, as well as the measure of lattices that have nodes in a small set but no nodes in another large set. 
%

\begin{lemma}
\label{lem:one_point}
Let $S\subset \R^2$  be a measurable set. Then $\mu\{\Gamma g \in X: \Z^2g \cap S \ne \emptyset\} \le \leb S$. 
\end{lemma}

\begin{proof}
 Follows from Markov's inequality. 
\end{proof}

\begin{lemma}\label{lem:affine_empty}Let $T$ be a measurable subset of $\R^2$. Then
$$\mu\{\Gamma g \in X: \Z^2g \cap T = \emptyset\} \le (1+\leb T)^{-1}.$$
\end{lemma}

\begin{proof}
See \cite[Theorem 1]{athreya_random_2014}. 
\end{proof}

%
%

\begin{lemma}\label{lem:two_sets}
Let $S_1$, $S_2$ be measurable subsets of $\R^2$. Then $$\mu\{\Gamma g: \Z^2g \cap S_1 \ne \emptyset, \Z^2g \cap S_2 \ne\emptyset\}\le \leb S_1 \leb S_2 + \leb(S_1 \cap S_2).$$
\end{lemma}

\begin{proof}
Follows from \cite[Propositions~7.10, 7.11]{marklof_strombergsson_free_path_length_2010}. 
\end{proof}

\begin{lemma}\label{lem:two_sets_empty}
Let $S_1$, $S_2$ be measurable subsets of $\R^2$. Then $$\mu\{\Gamma g: \Z^2g \cap S_1 \ne \emptyset, \Z^2g \cap S_2 = \emptyset\}\le \leb S_1 (1+\leb S_2)^{-1}.$$
\end{lemma}

\begin{proof}
Follows from \cite[Proposition~7.10]{marklof_strombergsson_free_path_length_2010} and   \cite[Theorem 2.2]{athreya_margulis_logarithm}. 
\end{proof}

%

We begin by studying the singular set of the function $\Gamma g \mapsto \ind{L(\Gamma g) \ge t}$, which is obviously $\d \{L(\Gamma g) \ge t\}.$ We analyse five possibilities: 
\begin{align*}
 \d \{L(\Gamma g) \ge t\} = \d & \{L(\Gamma g) \ge t\} \cap \{L(\Gamma g) = \infty\}\\
 &\bigcup \d  \{L(\Gamma g) \ge t\} \cap \{\infty> L(\Gamma g) > t\}\\
 &\bigcup \d  \{L(\Gamma g) \ge t\} \cap \{L(\Gamma g) = t\}\\
 &\bigcup \d  \{L(\Gamma g) \ge t\} \cap \{0< L(\Gamma g) < t\}\\
 &\bigcup \d  \{L(\Gamma g) \ge t\} \cap \{L(\Gamma g) =0\}.
\end{align*}

In the first case we have 
\begin{align*}
 \d_{2\delta}(\d  \{L(\Gamma g) \ge t\} \cap \{L(\Gamma g) = \infty\})& \subset \d_{2\delta}( \{L(\Gamma g) = \infty\})\\
 &\subset \d_{2\delta}\{\Z^2g \cap ((0,1)\times \R) = \emptyset\}\\
 &\subset \{\Z^2g \cap Q_\delta = \emptyset\},
\end{align*}
where $Q_\delta$ is the quadrilateral defined by inequalities 
\begin{align*}
 w_1& \gg (1+w_2)\delta\\
  w_1& \gg (1-w_2)\delta\\
 1-w_1& \gg (1+w_2)\delta\\
 1-w_1& \gg (1-w_2)\delta
\end{align*}
in the $(w_1,w_2)$-plane by Lemma \ref{lem:delta_nei}. Note that $L(\Gamma g) = \infty$ implies that the lattice has no nodes in the open strip $\{0<w_1<1\}$ so that (both) $c_+$, $c_-$ can be taken to be $\infty$ and $-\infty$, respectively. 
From Lemma \ref{lem:affine_empty}, we have that 
\begin{align*}
 \mu(\d_{2\delta}(\d  \{L(\Gamma g) \ge t\} \cap \{L(\Gamma g)= \infty\})) \ll 1/\leb(Q_\delta)\ll \delta. 
\end{align*}

In the second case we analyse 
\begin{align*}
 \d  \{L(\Gamma g) \ge t\} \cap \{\infty> L(\Gamma g) > t\}. 
\end{align*}
This set includes some lattices that contain the origin, as well as some lattices that contain a point on the line $\{w_1=1\}.$ In this first subcase we include \emph{all} lattices that contain the origin; their contribution is 
\begin{align*}
 \mu(\d_{2\delta}\{0\in \Z^2g\}) \ll \delta^2 
\end{align*}
by Lemma \ref{lem:delta_nei}. 

The second subcase comprises lattices 
that contain $(1,2h)$ from the segment  $\{1\}\times [-2t,2t]$  subject to the additional constraint that 
$\Delta_{h}$
does not meet the lattice. 
The $2\delta$-thickening of the set of such includes lattices with a node in 
$$
B_h^\delta =  [1-5(|h|+1)\delta,1+5(|h|+1)\delta] \times [h-5,h+5]
$$
and no node in 
\begin{equation}
 \Delta^\delta_h = \Delta_h \cap \{p\in\R^2 : d(p,\R^2\setminus \Delta_h)\ll 5(1+|h|)\delta\} \label{eq:thinning}
\end{equation}
for some integer $h$ between $-t-10$ and $t+10$. 
By an application of Lemma \ref{lem:two_sets_empty} with $S_1 = B_h^\delta$ and $S_2= \Delta_h^\delta$, the measure of a $2\delta$-thickening of this set is at most 
\begin{align}\label{eq:node_no_node}
\begin{aligned}
 \sum_{h=-\lfloor t\rfloor -10}^{\lceil t \rceil + 10} \frac{\leb B_h^\delta}{ 1+ \leb \Delta_h^\delta} & \ll \sum_{h=1}^{\lceil t \rceil +10} \frac{(h+1)\delta}{1+h}\\
 & \ll (t+1) \delta.
 \end{aligned}
\end{align}


For the third case we use a combination of Lemma \ref{lem:delta_nei} and Lemma \ref{lem:two_sets}. Any lattice in $\{L(\Gamma g) = t\}$ will contain a node in $\Delta_t$; call this node $(v,2cv)$. If there are several such nodes, use one of the points with the smallest positive value of $c$. In addition to $(v,2cv)$, the lattice must contain a node on the (open) segment joining $(0,0)$ to $(1,2(t-c)v)$. If $t\le 10$, say, then we have 
\begin{align*}
 \mu(\d_{2\delta}\{L(\Gamma g) = t\}) \ll 100\times 100 \delta\ll \delta
\end{align*}
by Lemma \ref{lem:delta_nei}. So assume that $t$ is large, to wit $t>10$. Then $v$ is contained in $(0,1/t)\cup (1-1/t,1)$ by \cite[Lemma~3.12]{ElkiesMcM04}. Indeed, in order for the lattice to contain $(v,2cv)$ and have no nodes inside $\Delta_{c-t,c}$, the quantity in \cite[eq.~(3.44)]{ElkiesMcM04} must be positive, whence $v^2-v+1/(2t)>0$. Since $t$ is large, $v$ must be away from the axis of symmetry of this critical parabola; that is, in, say, $(0,1/t)\cup (1-1/t,1)$. Similarly, the node $(v',2c'v')$ of the lattice that lies on the open segment joining $(0,0)$ to $(1,2(t-c)v)$ must satisfy $v'\in (0,1/t)\cup (1-1/t,1)$. Thus, 
\begin{align*}
\mu(\d_{2\delta}\{L(\Gamma g) = t\}) \ll (t^2+1)\delta. 
\end{align*}

Case four does not arise. 

Case five is a singular case for lattices that meet the open segment $(0,1)\times \{0\}$. By Lemmas \ref{lem:delta_nei} and \ref{lem:one_point}, 
\begin{align*}
 \mu(\d_{2\delta}(\d  \{L(\Gamma g) \ge t\} \cap \{L(\Gamma g) =0\})) \ll \mu(\d_{2\delta}\{L(\Gamma g) =0\})\ll \delta. 
\end{align*}

We now have enough control to prove the first statement of Proposition \ref{prop:smoothing}; to prove the second statement it remains to understand the singular set of the function $\Gamma g\mapsto \frac1{L(\Gamma g)} \ind{L(\Gamma g)\ge t}$, which, considering our analysis of $\Gamma g\mapsto  \ind{L(\Gamma g)\ge t}$, amounts to studying $\Gamma g\mapsto \frac1{L(\Gamma g)}$. If the latter function is not smooth, then $\Z^2g$ contains a node in the set 
\begin{align*}
 ([0,1]\times \{0\} )
 \cup
 (\{1\}\times \R).
\end{align*}
We distinguish three cases. If this node is contained in $([0,1]\times \{0\} )
 \cup
 (\{1\}\times [-10,10])$, we use the bound 
 \begin{align*}
  \mu \big(\d_{2\delta}\{\Z^2 g \cap (([0,1]\times \{0\} )
 \cup
 (\{1\}\times [-10,10])) \ne\emptyset\}\big)\ll \delta.
 \end{align*}
The other two cases are controlled by $h\in(5,o(1/\delta))$ to be chosen later, and we need to recall that $\Gamma g$ where $L$ is not smooth are required not only to contain a node of the form $(1,2h')$, but also no nodes in the triangle $\Delta_{h'}$. 
(Without loss of generality assume $h'>0$.) 
When $h'>h$, we keep only the latter condition, meaning that 
\begin{equation}\begin{split}
\mu(\d_{2\delta}\{\Z^2g\cap \Delta_h =\emptyset\}) & \ll \mu(\Z^2g\cap \Delta^{\delta}_h =\emptyset\}\\
&\ll (1+\leb \Delta^{\delta}_h)^{-1}\\
&\ll (1+ h)^{-1}.\label{eq:h_large}
\end{split}
\end{equation}
Here $\Delta^{\delta}_h$ is a $2(1+|h|)\delta$-thinning of $\Delta_h$ as defined in \eqref{eq:thinning}. By virtue of the assumption $h<o(1/\delta)$, thinning barely modifies the original set, so that $\leb \Delta^{\delta}_h + 1 \gg \leb \Delta_h + 1= h+1$. The last line then follows by Lemma \ref{lem:affine_empty}. 

The final case is $5<h'\le h$. Here we use the method used to obtain \eqref{eq:node_no_node} above to get 
\begin{equation}\begin{split}
 \mu(\d_{2\delta}\{\Z^2g\cap \Delta_h =\emptyset, &\Z^2g \cap (\{1\}\times [10, h])\})\\
 & \ll \sum_{h' = 4}^{\lceil h \rceil + 1} \mu\{\Z^2g\cap \Delta_{h'}^\delta = \emptyset, \Z^2g\cap B_{h'}^\delta \ne \emptyset\}
 \\
 &
 \ll 
 \sum_{h' = 4}^{\lceil h \rceil + 1} \frac{\leb B_{h'}^\delta}{ 1+ \leb \Delta_{h'}^\delta}\\
 &
 \ll
 \sum_{h' = 4}^{\lceil h \rceil + 1} \frac{(h'+1)t}{h'+1} \\
 &
 \ll 
 (h+1)\delta.\label{eq:h_small}
\end{split}
\end{equation}
%
The optimal choice for $h$ is $\delta^{-1/2}$, making \eqref{eq:h_large} and \eqref{eq:h_small} equal to $\delta^{1/2}.$ The measure of the thickened boundary is thus at most a constant times $(t^2+1)\delta$ or $(t^2+1)\delta + \delta^{1/2}$ for functions from \eqref{eq:function_sigma} or \eqref{eq:function_lambda}, accordingly.

We now prove that 
$$
 f^-_\delta \le f \le f^+_\delta
$$
everywhere on $X$. By symmetry, it is enough to establish the right hand inequality. Consider the case of the function from \eqref{eq:lambda_smoothing}. Suppose there exists $g$ so that $f(\Gamma g) > f^+_\delta(\Gamma g)$. Then there exists $g' \in gU_{\delta}$ such that $f(\Gamma g) > f^\sharp_\delta(\Gamma g') + C \delta^{1/2}.$ 
If $f^\sharp_\delta (\Gamma g') = f(\Gamma g'),$ then $d(\Gamma g', \Sing f)\ge 2\delta$ and $d(\Z^2 g', E_\delta)\ge 2\delta$ ($d$ denotes the Euclidean  metric on $\R^2$ in the second case). In particular, $\Gamma g$ and $\Gamma g'$ are contained in a ball of size $\delta$ that does not intersect $\Sing f$ or $\{\Gamma g'' : \Z^2 g'' \cap E_\delta \ne  \emptyset\},$ and $f^\sharp_\delta (\Gamma g') = f(\Gamma g')$ throughout this ball. By Lemma \ref{lem:derivative_control} and the Mean Value Theorem, $f(\Gamma g') -f(\Gamma g) $ cannot exceed a constant times 
\begin{align*}\left|\delta
D(1/L(\Gamma g''))\right| & \ll
\delta\left\lvert\frac{D.L(\Gamma g'')}{L^2(\Gamma g'')}\right\rvert\\
&\ll
\delta
\frac{1+L^2(\Gamma g'') + \delta^{-1/2}(1+L(\Gamma g''))}{L^2(\Gamma g'')}
\\
& \ll\delta^{1/2}\left(\frac1{t^2} + 1\right).\end{align*}
Implied constants here are absolute. Choosing $C$ to be a larger constant times $1+1/t^2$ will lead to a contradiction. 
If $f^\sharp_\delta (\Gamma g') = \max f$, then the contradiction is immediate. In the case of $f$ from \eqref{eq:sigma_smoothing}, the inequality is obvious. 

In order to prove the error terms in the statement of the theorem, we apply Theorem \ref{th:main} to $f^\pm_\delta$, which leads to the problem of optimising the expession 
$$
\int_{X}(f^+_\delta - f^-_\delta)d\mu + N^{-1/4+\eps}(\|f^+_\delta\|_{C^8_{\mathrm b}} + \|f^-_\delta\|_{C^8_{\mathrm b}})
$$
as a function of  $\delta$. 
For the first term in the case of \eqref{eq:lambda_smoothing} we have
\begin{align*}
 \int_{X}(f^+_\delta - f^-_\delta)\,d\mu 
 & = \int_X (f^\sharp_\delta -f^\flat_\delta + 2C\delta^{1/2})* \psi_\delta \, d\mu\\
 & = \int_X (f^\sharp_\delta -f^\flat_\delta)* \psi_\delta \, d\mu + O((1/t^2+1)\delta^{1/2})\\
 & \ll \int_{d(\Gamma g, \Sing f) < 4\delta \text{ or }\Z^2 g \cap \d_{3\delta} E_\delta \ne \emptyset}
\hspace{-1cm} (\max f -\min f)\, d\mu + O((1/t^2+1)\delta^{1/2})\\
& \ll \|f\|_{L^\infty} \mu(\d_{4\delta}(\Sing f)) +\|f\|_{L^\infty} \mu\{\Gamma g: \Z^2g \cap E_\delta \ne\emptyset\}\\ 
&\quad\quad+ O((1/t^2+1)\delta^{1/2})\\
& \ll \left(\frac{1+t^2}t+\frac1{t^2}+1\right)\delta^{1/2}\\
& \ll \left(\frac1{t^2}+t\right)\delta^{1/2}.
\end{align*}
In the case of \eqref{eq:sigma_smoothing} we have
\begin{align*}
 \int_{X}(f^+_\delta - f^-_\delta)\,d\mu  
  & = \int_X (f^\sharp_\delta -f^\flat_\delta)* \psi_\delta \, d\mu\\
  & \ll \mu(\d_{4\delta}(\Sing f))\\
  & \ll \delta (1+t^2).
\end{align*}
For \eqref{eq:sigma_smoothing},  we minimise $\delta(t^2+1) + N^{-1/4+\eps}\delta^{-8}$. On the other hand, for \eqref{eq:lambda_smoothing}, we instead minimise $\delta^{1/2} (1/t^2+t) + N^{-1/4+\eps}\delta^{-8}$. In each case we view $\delta$ as a function of $N$ only. The optimal choices are $\delta = N^{-\frac1{36}}$ and $\delta = N^{-\frac1{34}}$, respectively. This proves \eqref{eq:lambda_rate} and \eqref{eq:sigma_rate}.
\end{proof}

\begin{lemma}\label{lem:sigma_infinity_property}
For $\lambda_\infty(t)$ and  $\sigma_\infty(t)$ defined as in Proposition~\ref{prop:rate}, we have 
 \begin{align*}
  \frac{d}{dt} \sigma_\infty(t) & \ll \min(t, t^{-2}),\\ 
    \frac{d}{dt} \lambda_\infty(t) & \ll \min(1, t^{-3}),
 \end{align*}
\end{lemma}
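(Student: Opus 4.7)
Since $\sigma_\infty(t)=\mu\{L(\Gamma g)<t\}$ is the CDF of $L$ under $\mu$ and $\lambda_\infty(t)=\int_0^t \xi^{-1}\,d\sigma_\infty(\xi)$ by definition, the two densities satisfy the pointwise relation $\lambda_\infty'(t)=\sigma_\infty'(t)/t$. Hence the second claimed bound is a consequence of the first upon division by $t$, and both statements reduce to the single pointwise estimate $F(t)\ll \min(1,t^{-3})$ for the limiting gap density $F=\lambda_\infty'$.

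My plan is to read the bound on $F$ directly off the explicit piecewise formula established in \cite[Thm.~1.1]{ElkiesMcM04}. That theorem realises $F$ as a real-analytic function on each of the three intervals $[0,1/2]$, $[1/2,2]$, $[2,\infty)$, with non-analytic joins. On the first interval $F\equiv 0$ (as already recorded in the paper just before Proposition \ref{prop:rate}). On the second, $F$ is continuous and given by an explicit bounded expression, so $F=O(1)$. On the third, the explicit algebraic formula can be expanded as $t\to\infty$ to show that $F(t)$ is dominated by a term of order $t^{-3}$. Combining these three ranges yields $F(t)\ll \min(1,t^{-3})$ uniformly, which is exactly what is required.

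As a heuristic sanity check of the two regimes, the small-$t$ bound corresponds to the integrated estimate $\sigma_\infty(t)\ll t^2$: the condition $L(\Gamma g)<t$ forces $\Z^2g$ to meet every triangle $\Delta_{c_-,c_+}$ with $c_+>0>c_-$ and $c_+-c_-\ge t$; in particular it meets the two essentially disjoint triangles obtained as the limits $(c_-,c_+)\to (-t,0^+)$ and $(0^-,t)$, each of area $\approx t$, so Lemma \ref{lem:two_sets} gives $\sigma_\infty(t)\ll t^2$. The large-$t$ bound corresponds to $\mu\{L\ge t\}\ll 1/t$: any lattice with $L\ge t$ misses a convex region of area $\ge t$, whose $\mu$-measure is controlled by Lemma \ref{lem:affine_empty}, and the one-parameter family of such regions contributes only a logarithmic overhead after handling maximality.

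The main obstacle in a fully self-contained argument would be upgrading these integrated tail estimates to the claimed \emph{pointwise} bound on the density $\sigma_\infty'$. A direct attack would require analysing the codimension-one hypersurface $\{L(\Gamma g)=t\}\subset X$, for instance by parametrising the configurations in which the maximal empty triangle of area $t$ meets lattice points on its two slant edges and applying a Rogers-type formula for pairs of lattice points. Invoking the explicit formula of Elkies and McMullen bypasses this entirely and gives both bounds cleanly.
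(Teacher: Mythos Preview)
Your approach is essentially the same as the paper's: both derive the bounds directly from the explicit piecewise formula for the limiting density given by Elkies and McMullen (the paper cites \cite[Prop.~3.14]{ElkiesMcM04}, you cite \cite[Thm.~1.1]{ElkiesMcM04}). Your reduction via $\lambda_\infty'(t)=\sigma_\infty'(t)/t$ is correct and makes the equivalence of the two bounds transparent.

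One small factual slip: you write that $F\equiv 0$ on $[0,1/2]$, but the paper (and \cite{ElkiesMcM04}) only say that $F$ is \emph{constant} there---in fact it equals $6/\pi^2$. This does not affect your argument, since a bounded constant still gives $F(t)\ll 1=\min(1,t^{-3})$ for $t\le 1$, but you should correct the statement.
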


\begin{proof}
 Follows from the explicit formulas for $\sigma_\infty$ and $\lambda_\infty$ from \cite[Prop.~3.14]{ElkiesMcM04}. 
\end{proof}

\begin{proof}[Proof of Proposition~\ref{prop:rate}]
We wish to establish control on $\sigma_N$ first, so we write
\begin{align*}
 \sigma_N(t) - \sigma_\infty(t) & \le \sigma_N(t) - \sigma_{s^2} (ts^2/N)
 \\&\phantom{=\,} + \sigma_{s^2} (ts^2/N) - {\tilde \sigma}_{s^2} ((1-s^{-1/3}) ts^2/N) 
 \\&\phantom{=\,} + {\tilde \sigma}_{s^2} ((1-s^{-1/3}) ts^2/N) - \tilde {\tilde \sigma}_{s^2} ((1-s^{-1/3}) ts^2/N) 
 \\&\phantom{=\,} + \tilde {\tilde \sigma}_{s^2} ((1-s^{-1/3}) ts^2/N) -\sigma_\infty((1-s^{-1/3}) ts^2/N) 
 \\&\phantom{=\,} +\sigma_\infty((1-s^{-1/3}) ts^2/N) -\sigma_\infty(t).
\end{align*}
Now using Lemmas~\ref{lem:perfect_sq}, \ref{lem:homo}, \ref{lem:small_error}, Proposition~\ref{prop:smoothing}, and Lemma~\ref{lem:sigma_infinity_property}, we arrive at the upper bound 
\begin{align*}
 O(tN^{-1/2}) +O(s^{-1/3})+O(s^{-1})+O_\eps((1+t^2)N^{-\frac1{36}+\eps})+O(s^{-1/3}).
\end{align*}
For the lower bound on the difference, we write 
\begin{align*}
 \sigma_N(t) - \sigma_\infty(t) & \ge \sigma_N(t) - \sigma_{s^2} (ts^2/N)
 \\&\phantom{=\,} + \sigma_{s^2} (ts^2/N) - {\tilde \sigma}_{s^2} ((1+s^{-1/3}) ts^2/N) 
 \\&\phantom{=\,} + {\tilde \sigma}_{s^2} ((1+s^{-1/3}) ts^2/N) - \tilde {\tilde \sigma}_{s^2} ((1+s^{-1/3}) ts^2/N) 
 \\&\phantom{=\,} + \tilde {\tilde \sigma}_{s^2} ((1+s^{-1/3}) ts^2/N) -\sigma_\infty((1+s^{-1/3}) ts^2/N) 
 \\&\phantom{=\,} +\sigma_\infty((1+s^{-1/3}) ts^2/N) -\sigma_\infty(t),
\end{align*}
which matches the upper bound upon application of Lemmas~\ref{lem:perfect_sq}, \ref{lem:homo}, \ref{lem:small_error}, Proposition~\ref{prop:smoothing}, and Lemma~\ref{lem:sigma_infinity_property}. The proof of \eqref{eq:sigma_rate} is thus complete. 

To get at $\lambda_N(t)$ we start with the following bounds. 
\begin{align}\label{eq:lambda_easy}\begin{split}
 \lambda_N(t) - \lambda_\infty(t) & = \lambda_N(t) - \tfrac{s^2}{N}\lambda_{s^2}(ts^2/N) \\
&\phantom{=\,}+  \tfrac{s^2}{N}(\lambda_{s^2}(ts^2/N)-1) - (\lambda_\infty(t)-1) + O(N^{-1/2}).
\end{split}
\end{align}
The first line is controlled by Lemma \ref{lem:perfect_sq}, while for the second we need a more complicated argument.  Write the second line without the error term as 
\begin{equation}\label{eq:lambda_hard}
\begin{split}
  \frac{s^2}{N}(\lambda_{s^2}(ts^2/N)-1) - (\lambda_\infty(t)-1) 
  = ~& \int_{t}^\infty \frac{d\sigma_{\infty}(\xi)}{\xi} - \frac{s^2}N \int_{ts^2/N}^\infty \frac{d\sigma_{s^2}(\xi)}{\xi}\\
  = ~& \int_{t}^\infty \frac{d\sigma_{\infty}(\xi)}{\xi}\\
  &
   - \frac{s^2}N \left( \int_{ts^2/N}^\infty \frac{\sigma_{s^2}(\xi)\,d\xi}{\xi^2}
    - \frac{\sigma_{s^2}(ts^2/N)}{ts^2/N}\right).
\end{split}
\end{equation}
The terms containing $\sigma_{s^2}$ are controlled above using
\begin{align*}
 \sigma_{s^2} (\xi) & = \sigma_\infty(\xi) \\
 & \phantom{=\,} + \sigma_{s^2}(\xi) - \tilde \sigma_{s^2}((1-s^{-1/3}) \xi) \\
 & \phantom{=\,} + \tilde \sigma_{s^2}((1-s^{-1/3}) \xi) - \tilde{\tilde \sigma}_{s^2}((1-s^{-1/3}) \xi) \\
 & \phantom{=\,} + \tilde{\tilde \sigma}_{s^2}((1-s^{-1/3}) \xi) -\sigma_\infty((1-s^{-1/3})\xi)\\
 & \phantom{=\,} + \sigma_\infty((1-s^{-1/3})\xi) - \sigma_\infty(\xi)
\end{align*}
and below using 
\begin{align*}
 \sigma_{s^2} (\xi) & = \sigma_\infty(\xi) \\
 & \phantom{=\,} + \sigma_{s^2}(\xi) - \tilde \sigma_{s^2}((1+s^{-1/3}) \xi) \\
 & \phantom{=\,} + \tilde \sigma_{s^2}((1+s^{-1/3}) \xi) - \tilde{\tilde \sigma}_{s^2}((1+s^{-1/3}) \xi) \\
 & \phantom{=\,} + \tilde{\tilde \sigma}_{s^2}((1+s^{-1/3}) \xi) -\sigma_\infty((1+s^{-1/3})\xi)\\
 & \phantom{=\,} + \sigma_\infty((1+s^{-1/3})\xi) - \sigma_\infty(\xi)
\end{align*}
Using Lemmas~\ref{lem:homo}, \ref{lem:small_error}, Proposition~\ref{prop:smoothing}, and Lemma \ref{lem:sigma_infinity_property} and the splittings above, we get
\begin{align*}
 \frac{s^2}N \bigg( \int_{\frac{ts^2}N}^\infty \frac{\sigma_{s^2}(\xi)\,d\xi}{\xi^2} - \frac{\sigma_{s^2}(
 \frac{ts^2}N)}{ts^2/N}\bigg) & 
 \le \frac{s^2}N \int_{ts^2/N}^\infty \frac{d\sigma_\infty(\xi)}{\xi} \\
 & \phantom{=\,}+
 \int_{ts^2/N}^\infty \frac{O(s^{-1/3}) \, d\xi}{\xi^2} - \frac{O(s^{-1/3})}{ts^2/N} \\
 &\phantom{=\,} + \int_{ts^2/N}^\infty \frac{O(N^{-1/2})\, d\xi}{\xi^2} - \frac{O(N^{-1/2})}{ts^2/N}\\
 &\phantom{=\,} + \left((\tfrac{N}{ts^2})^2+\tfrac{ts^2}{N}\right) O_\eps\left(N^{-\frac1{68}+\eps}\right) \\
 &\phantom{=\,} 
 - \frac{\left(1+(\tfrac{ts^2}{N})^2\right)O_\eps\left(N^{-\frac1{36}+\eps}\right)}{ts^2/N}
 + O(s^{-1/3}).
\end{align*}
The lower bound is the same except for implied constants. Therefore we can rewrite \eqref{eq:lambda_hard} as 
\begin{align*}
\frac{s^2}{N}(\lambda_{s^2}(ts^2/N)-1) - (\lambda_\infty(t)-1) & = \lambda_\infty(t) - \frac{s^2}N \lambda_\infty(ts^2/N) + O_\eps\left((1/t^2+t)N^{-\frac1{68}+\eps}\right)\\
& \ll_\eps (1/t^2+t)N^{-\frac1{68}+\eps}
\end{align*}
by Lemma~\ref{lem:sigma_infinity_property}. Substituting this result into \eqref{eq:lambda_easy}, we finally arrive at the required bound. 
\end{proof}

\section{Sketch of the proof of Theorem \ref{th:main_second}\label{sec:generaldensity}}
The proof of Theorem \ref{th:main_second} proceeds analogously to that of Theorem \ref{th:main}. The main term in equation \eqref{eq:main_term} becomes 
\[
 \int_{\R}\tilde f_0\begin{pmatrix}\sqrt y&x/\sqrt y\\0&1/\sqrt y\end{pmatrix} \rho(x) \, dx.
\]
From \cite[Eqs.~(23), (24)]{strombergsson_effective_2013} this equals 
\[\int_X f\,  d\mu \int_\R \rho(x) \, dx + O(\|f\|_{C_{\mathrm{b}}^4} \|\rho\|_{W^{1,1}} y^{1/2} \log^3 (2+1/y)),\]
which is satisfactory for the theorem. 

Turning to the new error terms, we treat the term $c=0$ following \cite[Eq.~(25)]{strombergsson_effective_2013}. Our 
analogue of the remaining contribution to the error term  $E(y)$ in \eqref{eq:errorterms} is
$$
E_\rho(y)=
\sum_{\substack{c,n\ge1\\ (c,d) =1}}
 \int_{\R} e\left(n\left(\frac{dx}2+\frac{cx^2}{4}\right)\right)\tilde f_n\left(\begin{pmatrix}*&*\\c&d\end{pmatrix} \begin{pmatrix}\sqrt y&x/\sqrt y\\0&1/\sqrt y\end{pmatrix}\right)\rho(x)dx.
$$
Now we proceed directly to the change of variables \eqref{eq:change_of_variables},  following \cite[Lemma 6.1]{strombergsson_effective_2013}. This gives 
\begin{align*}
\int_\R e\left(n\left(\frac{dx}2+\frac{cx^2}4\right)\right) \tilde f_n \left(\begin{pmatrix}a&b\\c&d\end{pmatrix} \begin{pmatrix}\sqrt y&x/\sqrt y\\0& 1/\sqrt y\end{pmatrix}\right) \rho(x)\, dx = \int_0^\pi g(\theta) d\theta,
\end{align*}
for $c>0$, where
$$
g(\theta)=\tilde f_n\left(\frac{a}c -\frac{\sin 2\theta}{2c^2 y}, \frac{\sin^2\theta}{c^2y},\theta\right) e\left(-\frac{nd^2}{4c}+\frac{ncy^2\ctg^2 \theta}4\right)
\rho\left(-\frac dc + y\ctg\theta\right)
\frac{y}{\sin^2\theta}.
$$
We have the same  integral with limits $-\pi$ and 0 if $c<0$. Combining terms with positive and negative $c$ gives
$$
E_\rho(y)=
\sum_{\substack{ c,n\ge 1}} \int_{-\pi}^\pi \sum_{(c,d)=1} g(\theta)d\theta.
$$
We periodise $\rho$ with period $2$, by setting  $P(z)=\sum_{m\in \Z} \rho(z+2m).$ Now the latter expression 
can be rewritten using only periodic functions as 
\begin{align*}
E_\rho(y)=
\sum_{\substack {c,n\ge 1 }} \int_{-\pi}^\pi \sum_{\substack{d\imod {2c}\\(c,d)=1}} &\tilde f_n\left(\frac{\bar d}c -\frac{\sin 2\theta}{2c^2 y}, \frac{\sin^2\theta}{c^2y},\theta\right) e\left(-\frac{nd^2}{4c}+\frac{ncy^2\ctg^2 \theta}4\right)\\
&\times
P\left(-\frac dc + y\ctg\theta\right)
\frac{y\, d\theta}{\sin^2\theta}.\end{align*}
Exploiting periodicity of $\tilde f_n$ and $P$, we replace them by their Fourier series leaving an exponential sum and two Fourier coefficients to control. Thus  
\beq\label{eq:new_final}
E_\rho(y)
\ll \sum_{\substack {c,n\ge 1 }} \int_{-\pi}^\pi 
\sum_{k,l\in\Z} \Bigg|\sum_{\substack{d\imod {2c}\\(c,d)=1}}e\left(-\frac{nd^2}{4c}+\frac{l\bar d}{c}-\frac{kd}{2c}\right)\Bigg| |b_l^{(n,c)}(\theta) \,a_k| \frac{y\,d\theta}{\sin^2 \theta},
\eeq
where $a_k$ are the Fourier coefficients of $P$ and $b_l^{(n,c)}(\theta)$ are as in \eqref{eq:fseries} and \eqref{eq:bdef}. 
In particular, we have 
\beq
\label{eq:measure_fourier}
a_k\ll_\eta (1+|k|)^{-1-\eta} \|\rho\|_{W^{1,1}}^{1-\eta} \|\rho\|_{W^{2,1}}^\eta
\eeq
for all $\eta\in (0,1)$, 
as can be seen  using integration by parts.

The exponential sum in \eqref{eq:new_final} can be estimated using the tools developed in Section \ref{sec:exponential}.
Note that the case $k=0$ corresponds precisely to the sum considered in 
Lemma \ref{lem:expo}.
We say $u\in\N$ is \emph{square-free} if $p\mid u$ implies $p^2\nmid u$ for every prime $p$, and similarly $v\in \NN$ is \emph{square-full} if $p\mid v$ implies $p^2\mid v$.

\begin{lemma}\label{lem:new_expo}
Write $c=c_0c_1=c_0uv$ with $u$ square-free, $v$ square-full, and furthermore $(uv,6)=(u,v)=1$. Then there exists an absolute  constant $K\in \NN$ such that 
\[
\left|
\sum_{\substack{d\imod {2c}\\ (c,d)=1}}e\left(
-\frac{nd^2}{4c}+
\frac{l\bar d}{c}-\frac{kd}{2c}\right)\right|
\le K^{\omega(c)} c_0^{3/4}u^{1/2}v^{2/3}
(c_0,k,n,l)^{1/4}
(u,k,n,l)^{1/2} (v,k,n,l)^{1/3}.
\]
\end{lemma}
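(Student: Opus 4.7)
The plan is to adapt the proof of Lemma \ref{lem:expo} to handle the extra linear term $-kd/(2c)$ in the phase. As there, I would split the sum over $d \imod{2c}$ into its even and odd residues, giving two complete sums whose moduli are divisors of $2c$; both are amenable to the multiplicativity property \eqref{eq:mult} (extended in the obvious way to three variables $A,B,C$), so the problem reduces to estimating the prime-power sums
\[
T_{p^m}(A,B,C) = \sumstar_{n \imod{p^m}} e_{p^m}\bigl(An^2 + B\bar n + Cn\bigr),
\]
where the parameters $A,B,C$ are $\bmod\, p^m$ reductions of small multiples of $n$, $l$, $k$ respectively. The factorisation $c = c_0 uv$ isolates three regimes: the $c_0$ part (primes $2,3$) handled exactly as in Lemma \ref{lem2'} and Lemma \ref{lem2} for $p=3$, contributing $c_0^{3/4}(c_0,k,n,l)^{1/4}$; the squarefree part $u$ (each prime to the first power); and the squarefull part $v$ (each prime to a power $\geq 2$).

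For a prime $p > 3$ with $p \,\|\, u$, the Weil/Bombieri bound in the form of \eqref{eq:weil} applies to the rational function $f(x) = (Ax^3 + Cx^2 + B)/x$ and gives $|T_p(A,B,C)| \leq 3 p^{1/2}(p,A,B,C)^{1/2}$. Multiplying over the distinct primes of $u$ produces the factor $u^{1/2}(u,k,n,l)^{1/2}$, up to a bounded per-prime constant absorbed by $K^{\omega(c)}$.

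For a prime $p > 3$ with $p^m \,\|\, v$, $m \geq 2$, I would apply \cite[Thm.~3.1(a)]{cochrane-zheng} to $f(x) = Ax^2 + B\bar x + Cx$. The derivative is $f'(x) = (2Ax^3 + Cx^2 - B)/x^2$, so critical points are the roots in $\mathbb F_p^*$ of the cubic $2A'\alpha^3 + C'\alpha^2 - B' \equiv 0 \pmod p$, where primes are stripped from the coefficients as in the proof of Lemma \ref{lem2} and $t = v_p((A,B,C))$. The essential observation is that for $p > 3$ this cubic cannot have a triple root: differentiating twice yields $12A'\alpha + 2C' \equiv 0$, and combined with the first derivative $2\alpha(3A'\alpha + C') \equiv 0$ and $\alpha \in \mathbb F_p^*$, one quickly forces $\alpha = 0$, a contradiction. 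Hence each critical point has multiplicity $\nu_\alpha \leq 2$, there are at most three of them, and Cochrane-Zheng delivers
\[
|S_\alpha| \leq \nu_\alpha\, p^{t/(\nu_\alpha+1)}\, p^{m(1 - 1/(\nu_\alpha+1))} \leq 2 p^{t/3} p^{2m/3}.
\]
Taking the product over primes $p \mid v$ converts $p^{t/3}$ into $(v,k,n,l)^{1/3}$ and $p^{2m/3}$ into $v^{2/3}$, with the bounded constants again absorbed by $K^{\omega(c)}$. Trivial estimates in the degenerate small-$m$ regimes ($m \leq t$ and $m = t+1$) are handled exactly as in Lemma \ref{lem2} and are easily seen to be dominated by the same expression.

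The main obstacle is the multiplicity analysis: one must rule out triple roots of the relevant cubic uniformly for $p > 3$ in order to obtain the exponent $2/3$ rather than the weaker $3/4$ that would arise from $\nu_\alpha = 3$. Once this is in hand, the remaining work is bookkeeping—checking that the product of the per-prime bounds over all primes dividing $c$ yields the claimed shape, and verifying that all bounded factors arising from the small cases and from the $p = 2,3$ analysis can be uniformly swept into $K^{\omega(c)}$ for a suitable absolute $K$.
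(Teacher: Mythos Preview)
Your proposal is correct and follows essentially the same route as the paper: reduce via the even/odd split of Lemma~\ref{lem:expo} to the prime-power sums $T_{p^m}(A,B,C)$, apply Weil--Bombieri for $m=1$, and invoke Cochrane--Zheng for $m\geq 2$, the key point being that critical points have multiplicity $\nu_\alpha\le 2$ for $p>3$. You in fact supply the multiplicity argument that the paper merely asserts; note only that your derivative computation forces $A'\alpha=0$, so strictly you should also observe that when $A'\equiv 0\pmod p$ the numerator degenerates to a quadratic and hence cannot have a triple root either.
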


\begin{proof}
We will sketch the proof of this result based on the methods of Section 
\ref{sec:exponential}.  In doing so we will not pay heed to the particular value of $K$.
Arguing as in the proof of 
Lemma \ref{lem:expo}, the main task is to estimate the exponential sum 
$$
T_{p^m}(A,B,C)=
\sumstar_{n\imod{p^m}} e_{p^m}\left(
An^2+B\bar n+Cn\right),
$$
for $A,B,C\in \ZZ$ and a prime power $p^m$.  
Note that $
T_{p^m}(A,B,0)=T_{p^m}(A,B)$ in the notation of \eqref{eq:Tq}.
We may now write $T_{p^m}(A,B,C)$ in the form 
\eqref{eq:fall},
with  $f_1(x)=Ax^3+B+Cx^2$ and $f_2(x)=x$.
When 
 $m=1$ it follows from  Bombieri \cite{bombieri} that 
$$
|T_p(A,B,C)|\leq 2 p^{1/2}(p,A,B,C)^{1/2}.
$$

When $m\geq 2$, we apply  Cochrane and and Zheng \cite{cochrane-zheng}, as before.
We see that
$$
f'(x)=\frac{2Ax^3-B+Cx^2}{x^2},
$$
whence $t=\ord_p(f')=v_p\left((2A,B,C)\right)$, 
 in the notation of  \cite[Eq.~(1.8)]{cochrane-zheng}.
This time we have 
$$
\mathcal{A}=\left\{\alpha\in \FF_p^*: A'\alpha^3 -B'+C'\alpha^2\equiv 0 \bmod{p}\right\},
$$
where $A'=p^{-t}2A$, $B'=p^{-t}B$ and $C'=p^{-t}C$.
In particular $(p,A',B',C')=1$.
We may henceforth assume that $m\geq t+3$ since the desired conclusion follows from the trivial bound otherwise. 
One finds that any critical point $\alpha\in \mathcal{A}$ has multiplicity $\nu_\alpha\leq 2$ when $p\neq 3$ and multiplicity $\nu_\alpha\leq 3$ when $p=3$.
Next, one applies 
\cite[Thm.~3.1]{cochrane-zheng} 
to deduce that 
\begin{align*}
T_{p^m}(A,B,C) &\ll 
\begin{cases}
p^{2m/3+\min\{m,t\}/3}, &\mbox{if $p\neq 3$,}\\
3^{3m/4+\min\{m,t\}/4}, &\mbox{if $p= 3$,}
\end{cases}
\end{align*}
for $m\geq 2$.

Once combined with our treatment of the case $m=1$, one 
 arrives at the statement of the lemma on invoking multiplicativity.
 \end{proof}

We have four regimes in \eqref{eq:new_final} to consider, according to whether or not $k$ or $l$ vanish. 
 The cases with $k=0$ are identical to those already dealt with in Section \ref{sec:errorterms}. We proceed
to  present the argument needed to handle  the case $l=0$ and  $k\ne 0$.
After using \eqref{eq:fourierbound} with $m=2$ in \eqref{eq:new_final},  followed by \eqref{eq:strominequality},  
Lemma \ref{lem:new_expo} and \eqref{eq:measure_fourier}, we arrive at the bound
\beq\label{eq:intermediate}
\begin{split}
E_\rho(y)
\ll~& 
 \|f\|_{C_{\mathrm{b}}^2}\|\rho\|_{W^{1,1}}^{1-\eta} \|\rho\|_{W^{2,1}}^\eta 
\\
&\times \sum_{c=c_0uv, n\geq 1} \sum_{k\ne 0}\frac{y}{|k|^{1+\eta}}\cdot \frac{K^{\omega(c)}c_0^{3/4}u^{1/2}v^{2/3}
 (c_0,k,n)^{1/4}
 (u,k,n)^{1/2} (v,k,n)^{1/3}  }{nc\sqrt y (1+ nc\sqrt y)}\\
=~& \|f\|_{C_{\mathrm{b}}^2}\|\rho\|_{W^{1,1}}^{1-\eta} \|\rho\|_{W^{2,1}}^\eta 
 S(y),
\end{split}
\eeq
say.
We will apply the upper bounds 
$(c_0,k,n)^{1/4}\leq c_0^{1/4-\eta/4}|k|^{\eta/4}$,
$(u,k,n)^{1/2}\leq (u,k)^{1/2}$, and 
$(v,k,n)^{1/3}\leq v^{1/3-\eta/4}|k|^{\eta/4}$ in order to simplify this expression.
In particular the resulting sum over $c_0$ is absolutely convergent by \eqref{eq:gamma}. 
Next  we    divide the sum so that $uv$ belongs to the dyadic intervals $[2^{j-1},2^{j})$ for $j\in \N$.
In this way we deduce that 
\begin{align*}
S(y)
&\ll \sum_{n\ge 1}
\sum_{k\ne 0}\frac{y^{1/2}}{|k|^{1+\eta/2}}
\sum_{j\ge 1} {\sum_{\substack{v\le 2^j \\ v\text{ sq.-full}}}} \sum_{2^{j-1}/v\leq u<  2^j/v}
\frac{K^{\omega(uv)}(u,k)^{1/2}v^{1/2-\eta/4} }{n\sqrt{uv}(1+nuv\sqrt y)}\\
&\ll  
 \sum_{n\ge 1} \frac{1}n 
\sum_{k\ne 0}\frac{y^{1/2}}{|k|^{1+\eta/2}}
  \sum_{j\ge1} \frac{1}{2^{j/2}(1+n\sqrt y2^j)}
{\sum_{\substack{v\le 2^j \\ v\text{ sq.-full}}}} K^{\omega(v)}v^{1/2-\eta/4}
\smashoperator{\sum_{u\le  2^j/v}} K^{\omega(u)}
(u,k)^{1/2}.
\end{align*}
Now, we have $ \sum_{n\le x} K^{\omega(n)} \ll x\log^{K-1} x$
for any $K>1$ and $x\geq 2$ (see \cite[Thm.~II.6.1]{tenenbaum_introduction_1995}, for example).
Hence it follows that 
\begin{align*}
\sum_{n\le x} K^{\omega(n)}(n,k)^{1/2} 
&\leq \sum_{h\mid k}h^{1/2} \sum_{\substack{n\leq x\\h\mid n}}K^{\omega(n)}\\
&\ll x\log^{K-1} x \sum_{h\mid k}h^{-1/2}K^{\omega(h)},
\end{align*}
for $x\geq 2$.
The remaining sum over $h$ is at most $\ll _K \tau(k)$,  where $\tau$ denotes the divisor function, 
whence
$$S(y)\ll y^{1/2} 
\sum_{n\ge 1} \frac{1}n 
\sum_{k\ne 0}\frac{\tau(k)}{|k|^{1+\eta/2}}
  \sum_{j\ge1} \frac{2^{j/2}\log^{K-1} 2^j}{1+n\sqrt y2^j}
{\sum_{\substack{v\le 2^j \\ v\text{ sq.-full}}}} \frac{K^{\omega(v)}}{v^{1/2+\eta/4}}.
$$
The sum over $k$ is convergent. Furthermore, 
since square-full integers have square root density, the sum over $v$ is also convergent. 
Hence
$$S(y)\ll y^{1/2} 
\sum_{n\ge 1} \frac{1}n 
  \sum_{j\ge1} \frac{2^{j/2}\log^{K-1} 2^j}{1+n\sqrt y2^j}
$$
Once substituted into \eqref{eq:intermediate},
this leads to the  satisfactory
contribution 
$$
\ll 
\|f\|_{C_{\mathrm{b}}^2}\|\rho\|_{W^{1,1}}^{1-\eta} \|\rho\|_{W^{2,1}}^\eta y^{1/4} \log^{K-1}(2+y^{-1}),
$$
to $E_\rho(y)$, for any $\eta\in (0,1)$.

In a similar manner, using 
instead the second estimate from \eqref{eq:fourierbound} with $m=6$, 
the contribution from  $kl\ne0$  is found to be 
$$
\ll \|f\|_{C_{\mathrm{b}}^8}\|\rho\|_{W^{1,1}}^{1-\eta} \|\rho\|_{W^{2,1}}^\eta y^{1/4} \log^{K-1}(2+y^{-1}),
$$
for any $\eta\in (0,1)$.
This completes the sketch of the proof of Theorem \ref{th:main_second}.

\bibliographystyle{plain}
\bibliography{bibliography}

\def\cprime{$'$}
\begin{thebibliography}{10}

\bibitem{athreya_random_2014}
Jayadev~S. Athreya.
\newblock Random {Affine} {Lattices}.
\newblock {\em arXiv:1409.6355, to appear in Contemporary Mathematics},
  September 2014.

\bibitem{athreya_margulis_logarithm}
Jayadev~S. Athreya and Gregory~A. Margulis.
\newblock Logarithm laws for unipotent flows. {I}.
\newblock {\em J. Mod. Dyn.}, 3(3):359--378, 2009.

\bibitem{bombieri}
Enrico Bombieri.
\newblock On exponential sums in finite fields.
\newblock {\em Amer. J. Math.}, 88:71--105, 1966.

\bibitem{burger_horocycle_1990}
Marc Burger.
\newblock Horocycle flow on geometrically finite surfaces.
\newblock {\em Duke Math. J.}, 61(3):779--803, 1990.

\bibitem{cochrane-zheng}
Todd Cochrane and Zhiyong Zheng.
\newblock Exponential sums with rational function entries.
\newblock {\em Acta Arithmetica}, 95:67--95, 2000.

\bibitem{EMV_directions_2013}
Daniel El-Baz, Jens Marklof, and Ilya Vinogradov.
\newblock The distribution of directions in an affine lattice: Two-point
  correlations and mixed moments.
\newblock {\em Int. Math. Res. Not.}, 2015(5):1371--1400, January 2015.

\bibitem{el-baz_two-point_2013}
Daniel El-Baz, Jens Marklof, and Ilya Vinogradov.
\newblock The two-point correlation function of the fractional parts of
  {$\sqrt{n}$} is {P}oisson.
\newblock {\em Proc. Amer. Math. Soc.}, 143(7):2815--2828, 2015.

\bibitem{ElkiesMcM04}
Noam~D. Elkies and Curtis~T. McMullen.
\newblock Gaps in {${\sqrt n}\bmod 1$} and ergodic theory.
\newblock {\em Duke Math. J.}, 123(1):95--139, 2004.

\bibitem{flaminio_invariant_2003}
Livio Flaminio and Giovanni Forni.
\newblock Invariant distributions and time averages for horocycle flows.
\newblock {\em Duke Math. J.}, 119(3):465--526, 2003.

\bibitem{marklof_strombergsson_free_path_length_2010}
Jens Marklof and Andreas Str{\"o}mbergsson.
\newblock The distribution of free path lengths in the periodic {L}orentz gas
  and related lattice point problems.
\newblock {\em Ann. of Math.}, 172(3):1949--2033, 2010.

\bibitem{ratner_raghunathans_1991}
Marina Ratner.
\newblock On {R}aghunathan's measure conjecture.
\newblock {\em Ann. of Math. (2)}, 134(3):545--607, 1991.

\bibitem{ratner_raghunathans_1991_1}
Marina Ratner.
\newblock Raghunathan's topological conjecture and distributions of unipotent
  flows.
\newblock {\em Duke Math. J.}, 63(1):235--280, 1991.

\bibitem{sarnak_asymptotic_1981}
Peter Sarnak.
\newblock Asymptotic behavior of periodic orbits of the horocycle flow and
  {E}isenstein series.
\newblock {\em Comm. Pure Appl. Math.}, 34(6):719--739, 1981.

\bibitem{Sinai13}
Ya.~G. Sinai.
\newblock Statistics of gaps in the sequence {$\{\sqrt{n}\}$}.
\newblock In {\em Dynamical systems and group actions}, volume 567 of {\em
  Contemp. Math.}, pages 185--189. Amer. Math. Soc., Providence, RI, 2012.

\bibitem{strombergsson_effective_2013}
Andreas Str{\"o}mbergsson.
\newblock An effective {R}atner equidistribution result for
  {$\mathrm{SL}(2,\mathbb{R})\ltimes\mathbb{R}^2$}.
\newblock {\em Duke Math. J.}, 164(5):843--902, 2015.

\bibitem{strombergsson_venkatesh_2005}
Andreas Str{\"o}mbergsson and Akshay Venkatesh.
\newblock Small solutions to linear congruences and {H}ecke equidistribution.
\newblock {\em Acta Arith.}, 118(1):41--78, 2005.

\bibitem{tenenbaum_introduction_1995}
G{\'e}rald Tenenbaum.
\newblock {\em Introduction to analytic and probabilistic number theory},
  volume~46 of {\em Cambridge Studies in Advanced Mathematics}.
\newblock Cambridge University Press, Cambridge, 1995.
\newblock Translated from the second French edition (1995) by C. B. Thomas.

\end{thebibliography}

\end{document}